\newtheorem{theorem}{Theorem}[section]
\newtheorem{lemma}[theorem]{Lemma}
\theoremstyle{definition}
\theoremstyle{remark}
\newtheorem{remark}[theorem]{Remark}
\newtheorem{proposition}[theorem]{Proposition}
\newcommand{\R}{{\mathbb R}}
\newcommand{\Z}{{\mathbb Z}}
\newcommand{\C}{{\mathbb C}}
\newcommand{\N}{{\mathbb N}}
\newcommand{\X}{{\mathbb X}}
\newcommand{\Y}{{\mathbb Y}}
\newcommand{\Ca}{{\mathbb O}}
\newcommand{\Ha}{{\mathbb H}}
\newcommand{\Fi}{{\mathbb F}}
\newcommand{\Sch}{{\mathcal C^2}}
\newcommand{\A}{{\mathcal A}}
\renewcommand{\S}{{\mathcal S}}
\renewcommand{\Im}{{\mathop{\mathrm{Im}}\,}}
\newcommand{\arccosh}{{\mathop{\mathrm{arccosh}}\,}}
\def\SO{\mathrm{SO}}
\def\Sp{\mathrm{Sp}}
\def\U{\mathrm{U}}
\def\Spin{\mathrm{Spin}}
\def\sh{\sinh}
\def\ch{\cosh}
\def\es{e^s}
\def\e-s{e^{-s}}
\def\de-s{e^{-2s}}
\numberwithin{equation}{section}
\begin{document}

\title[Cuspidal discrete series]
{Cuspidal discrete series\\for projective hyperbolic spaces}
\author{Nils Byrial Andersen}
\address{Department of Mathematics,
Aarhus University,
Ny Munkegade 118,
Building 1530,
DK-8000 Aarhus C,
Denmark}
\email{byrial@imf.au.dk}
\author{Mogens Flensted--Jensen}
\address{Department of Mathematical Sciences,
University of Copenhagen, Universitetspar\-ken 5,
DK-2100 Copenhagen {\O}, Denmark}
\email{mfj@life.ku.dk}

\subjclass[2010]{Primary 43A85; Secondary 22E30}


\dedicatory{Dedicated to Sigurdur Helgason on the occasion of his $85$th birthday}

\begin{abstract}
We have in \cite{AF-JS} proposed a definition of cusp forms on semisimple symmetric spaces $G/H$,
involving the notion of a Radon transform and a related Abel transform. For the real non-Riemannian
hyperbolic spaces, we showed that there exists an infinite number of
cuspidal discrete series, and at most finitely many non-cuspidal discrete series, including in particular the spherical discrete series.
For the projective spaces, the spherical discrete series are the only non-cuspidal discrete series.
Below, we extend these results to the other hyperbolic spaces, and
we also study the question of when the Abel transform of a Schwartz function is again a Schwartz function.
\end{abstract}

\maketitle

\section{Introduction}

We initiated, in joint work with Henrik Schlichtkrull, in \cite{AF-JS}
a generalization of Harish-Chandra's notion of cusp forms for real
semisimple Lie groups $G$ to semisimple symmetric spaces $G/H$.
In the group case, all the discrete series are cuspidal,
and this plays an important role in Harish-Chandra's work on the Plancherel formula.
However, in the established generalizations to $G/H$, cuspidality plays no role and, in fact, was hitherto not defined at all.

The notion of cuspidality relates to the integral geometry on the
symmetric space by using integration over a certain unipotent
subgroup $N^*\subset G$, whose definition is given in \cite{AF-JS}.
The map $f\mapsto \int_{N^*} f(\cdot nH)\,dn$,
which maps functions on $G/H$ to functions on $G/N^*$, is a kind of a {\it Radon transform} for $G/H$.
A discrete series is said to be {\it cuspidal} if it is annihilated
by this transform.

Let $p,q$ denote positive integers. The Radon transform, and the question of cuspidality,
on the real hyperbolic spaces $\SO(p,q+1)_e/\SO(p,q)_e$, was treated in detail in \cite{AF-JS}.
We showed that there is at most a finite number of non-cuspidal discrete series, including in particular
all the spherical discrete series, but also some non-spherical
discrete series. The non-spherical non-cuspidal discrete series are given 
by odd functions on the real hyperbolic space,
which means that they do not descend to functions on the real projective hyperbolic space.

In the present paper, we consider the projective hyperbolic spaces over the classical fields
$\Fi = \R, \C, \Ha$,
$$
G/H = \mathrm{O}(p+1,q+1) / (\mathrm{O}(p+1,q)\times \mathrm{O}(1)), \, \U(p+1,q+1) / (\U(p+1,q) \times \U(1)),
$$
$$
\Sp(p+1,q+1) / (\Sp(p+1,q) \times \Sp(1)),
$$
for $p\ge 0,q\ge 1$. Notice the change of indices from $p$ to $p+1$, to simplify formulae and calculations.

Our main result, Theorem~\ref{Main-thm}, states that the non-cuspidal discrete series for the projective
hyperbolic spaces precisely consist of the spherical discrete series.
The Radon transform of the generating functions is also given explicitly.
Finally, we show that the Abel transform maps (a dense subspace of) the Schwartz functions on $G/H$ perpendicular
to the non-cuspidal discrete series into Schwartz functions.
The latter result also holds for the non-projective real case, and is a new result for all cases.

Our calculations and main results are also valid, with $p=0, \, q=1$ and $d=8$, for the Cayley numbers $\Ca$,
corresponding to the exceptional symmetric space $F_{4(-20)} / \Spin(1,8)$.
Although the model for this space, and the group action on it, is more complicated,
this space can for our purposes be viewed as
$$
F_{4,(-20)} / \Spin(1,8) \, = "\U(1,2; \Ca) / \U(1,1; \Ca)\times \U(1; \Ca)".
$$
We state our results in full generality, but only give complete proofs for the
non-exceptional projective spaces, with some remarks on the other cases in the last section.

We would like to thank Henrik Schlichtkrull for input and fruitful discussions,
which in the real case lead to the explicit formulae involving the Hypergeometric function.
We also want to thank Job Kuit for discussions of part (vi) of Theorem~\ref{Main-thm}, explaining how to prove a
similar result in split rank one, using general theory.

Part of this work was outlined by the first author at the 
Special Session `Radon Transforms and Geometric Analysis in Honor of Sigurdur Helgason's 85th Birthday', 
at the 2012 AMS National Meeting in Boston.
He is grateful to the organizers Jens Christensen, Fulton Gonzalez, and Todd Quinto, for their
invitation to speak, and the hospitality at the meeting, and the subsequent Workshop on Geometric Analysis on Euclidean and Homogeneous Spaces.

\section{Model and structure}

Let $\mathbb F$ be one of the classical fields $\R$, $\C$ or
$\mathbb H$, and let $x\mapsto \overline{x}$ be the standard
(anti-) involution of $\mathbb F$. We make the standard identifications between
$\C$ and $\R^2$, and between $\mathbb H$ and $\R^4$.
Let $p\ge 0,\,q \ge 1$ be two integers, and consider the Hermitian form
$[ \cdot ,\cdot ]$ on $\mathbb F ^{p+q+2}$ given by
\[
[ x,y ]  =x_1 \overline{y}_1 + \dots + x_{p+1} \overline{y}_{p+1} - x_{p+2} \overline{y}_{p+2} -
\dots - x_{p+1+q+1} \overline{y}_{p+1+q+1},\,\, (x,y \in \mathbb F ^{p+q+2}).
\]
Let $G = \U (p+1,q+1;\mathbb F)$ denote the group of $(p+q+2)\times
(p+q+2)$ matrices over $\mathbb F$ preserving $[ \cdot ,\cdot ]$.
Thus $\U(p+1,q+1;\R)= \mathrm{O}(p+1,q+1)$,
$\U(p+1,q+1;\C)= \U(p+1,q+1)$ and $\U(p+1,q+1;\mathbb H)= \Sp(p+1,q+1)$
in standard notation.
Put $\U (p;\mathbb F) = \U (p,0;\mathbb F)$.

Let $x_0 = (0,\dots,0,1)^T$, where superscript $T$ indicates transpose.
Let $H=\U (p+1,q;\mathbb F)\times \U(1;\mathbb F)$ be the subgroup of $G$ stabilizing the line $\mathbb F\cdot x_0$
in $\mathbb F^{p+q+2}$. An involution $\sigma$ of $G$ fixing $H$ is given by
$\sigma (g) =JgJ$, where $J$ is the diagonal matrix with entries $(1,\dots,1,-1)$.
The reductive symmetric space $G/H$ (of rank $1$) can be identified with
the projective hyperbolic space $\X = \X (p+1,q+1;\mathbb F)$:
\[
\X= \{ z\in \mathbb F^{p+q+2} : [z,z] = -1\}/\sim,
\]
where
$\sim$ is the equivalence relation $z\sim zu,\,u\in \mathbb F^*$.

The Lie algebra $\mathfrak g$ of $G$ consists of $(p+q+2)\times
(p+q+2)$ matrices
\[\mathfrak g =
\left(
\begin{array}
[c]{cc}%
A & B \\
B^* & C \\
\end{array}
\right),
\]
where $A$ is a skew Hermitian $(p+1)\times (p+1)$ matrix,
$C$ is a skew Hermitian $(q+1)\times (q+1)$ matrix, and $B$ is an arbitrary
$(p+1)\times (q+1)$ matrix. Here $B^*$ denotes the conjugated transpose of $B$.

Let $K= K_1 \times K_2 =\U(p+1;\mathbb F)\times \U(q+1;\mathbb F)$
be the maximal compact subgroup of $G$ consisting of elements fixed by the
classical Cartan involution on $G$, $\theta (g)=(g^*) ^{-1},\,g\in G$.
Here $g^*$ denotes the conjugated transpose of $g$.
The Cartan involution on $\mathfrak g$ is given by: $\theta (X)=-X^*$. Let
$\mathfrak g = \mathfrak k \oplus \mathfrak p$ be the decomposition of
$\mathfrak g$ into the $\pm 1$-eigenspaces of $\theta$, where
$\mathfrak k = \{X\in \mathfrak g : \theta (X) = X\}$ and $\mathfrak p = \{X\in \mathfrak g : \theta (X) = -X\}$.
Similarly, let $\mathfrak g = \mathfrak h \oplus \mathfrak q$ be the decomposition of
$\mathfrak g$ into the $\pm 1$-eigenspaces of $\sigma(X) = JXJ$, where
$\mathfrak h = \{X\in \mathfrak g : \sigma (X) = X\}$ and $\mathfrak q = \{X\in \mathfrak g : \sigma (X) = -X\}$.

We choose a maximal abelian subalgebra
$\mathfrak a_{\mathfrak q}\subset \mathfrak p\cap \mathfrak q$ as
\[\mathfrak a_{\mathfrak q} = \left \{X_{t_1}=
\left(
\begin{array}
[c]{ccc}%
0 & 0 & t_1\\
0 & 0_{p,q} & 0 \\
t_1 & 0 & 0
\end{array}
\right) : t_1\in \R\right\},
\]
where $0_{p,q}$ is the $(p+q)\times (p+q)$ null matrix.  The exponential of $X_{t_1}$, $a_{t_1} = \exp (X_{t_1})$, is given by
\[a_{t_1} =\exp (X_{t_1})=
\left(
\begin{array}
[c]{ccc}%
\cosh t_1 & 0 & \sinh t_1\\
0 & I_{p,q} & 0 \\
\sinh t_1 & 0 & \cosh t_1
\end{array}
\right) ,
\]
where $I_{p,q}$ is the $(p+q)\times (p+q)$ identity matrix.
Also define $A_{\mathfrak q}=\exp (\mathfrak a_{\mathfrak q})$.

Let $A_{\mathfrak q}^+ = \{a_{t_1} : t_1 >0\}$.
Let $a(x) = a(kah)=a$ denote the projection onto
the $\overline{A_{\mathfrak q}^+}$ component in the Cartan decomposition $G=K\overline{A_q^+}H$ of $G$.
Let $M$ be the centralizer of $X_1 \in \mathfrak a_{\mathfrak q}$ (i.e.,\,when $t_1 =1$) in $K\cap H$.
Then $M$ is the stabilizer of the line $\mathbb F (1,0,\dots, 0,1)$, and the homogeneous space
$K/M$ can be identified with the projective image $\Y = \Y _{p+1,q+1}$ of the product of unit spheres
$\mathbb S^{p} \times \mathbb S^{q}$:
\[
\Y =\{ y\in \mathbb F^{p+q+2} : |y_1|^2 + \cdots + |y_{p+1}|^2=
|y_{p+2}|^2+ \cdots + |y_{p+q+2}|^2=1\}/\sim.
\]
The image of the set
$\{ z\in \mathbb F^{p+q+2} : [z,z] = -1,\,(z_{1},\dots,z_{p+1}) \ne 0 \}$ in $\X$ is
an open dense subset, which we will denote by $\X '$.
The map
\[
K/M \times \R^+ \to \X,\, (kM,t_1) \mapsto ka_{t_1} H,
\]
is a diffeomorphism onto $\X'$.

We introduce spherical coordinates on $\X$ as the pull back of the map:
$$
x(t_1,y) = (u \sinh t_1 ; v \cosh t_1 ),\,t_1 \in \R _+ ,\, y = (u;v) \in \mathbb S^{p} \times \mathbb S^{q}.
$$
We define a ($K$-invariant) `distance' from $x\in \X$ to the origin
as $|x| = |x(t_1,y)| =|t_1| $. Then $\X ' = \{x\in \X|\,|x| >0\}$.
We note that $\cosh ^2( |x|) = |x_{p+2}|^2 +\cdots + |x_{p+q+2}|^2$.
For $g\in G$, we define $|g| = |gH|$.

Let $r= \min\{p,q\}$, and let $X_t$ be the $(r+1)\times(r+1)$ anti-diagonal matrix with entries $t= (t_1,\dots,t_{r+1})\in \R^{r+1}$,
starting from the upper right corner.
We extend $\mathfrak a_{\mathfrak q}$ (viz.\,as $t_2=\cdots =t_{r+1}=0$) to a maximal subalgebra $\mathfrak a\subset \mathfrak p$ as
\[
\mathfrak a = \left \{ X_{t}=
\left(
\begin{array}
[c]{ccc}%
0 & 0 & X_t\\
0 & 0 & 0 \\
X_t^*  & 0 & 0
\end{array}
\right)
\right \}.
\]
We will also consider the sub-algebra $\mathfrak a_{\mathfrak h}= \mathfrak a\cap{\mathfrak h}
= \{ X_t \in \mathfrak a : t_1 = 0\}$.

Let (considered as row vectors)
$$u=(u_1,\dots,u_{p})\in \mathbb F^{p}\quad\text{and}\quad
v = (v_q, \dots ,v_1)\in\mathbb F^q.$$
It turns out to be convenient to number the entries of $v$ from right to left
as indicated.
Let furthermore $w\in \Im \mathbb F$ (i.e.,\,$w = 0$ for $\mathbb F = \R$).
Now define $N_{u,v,w}\in \mathfrak g$ as the matrix given by
\[
N_{u,v,w}=\left(
\begin{array}
[c]{cccc}%
-w & u & v & w\\
-\overline u^T & 0  & 0 & \overline u^T\\
\overline{v}^T & 0 & 0 & -\overline{v}^T \\
-w & u & v & w
\end{array}
\right).
\]
Then $\exp (N_{u,v,w}) =I+ N_{u,v,w}+ 1/2 N_{u,v,w} ^2$, and
\begin{equation}\label{expN}
\exp (N_{u,v,w}) \cdot x_0
= (1/2 (|u|^2 - |v|^2)+w,\overline u^T; -\overline{v}^T , 1 +1/2 (|u|^2 - |v|^2) +w)^T.
\end{equation}
A small calculation also yields that
\begin{equation}\label{aexpN}
a_{t_1}\exp ( N_{u,v,w}) \cdot x_0=\qquad\qquad\qquad\qquad\qquad\qquad\qquad\qquad
\end{equation}
\begin{equation*}
(\sinh t_1+ 1/2 e^{t_1}(|u|^2 - |v|^2)+e^{t_1}w, \overline u^T; -\overline{v}^T, \cosh t_1 + 1/2 e^{t_1} (|u|^2 - |v|^2) +e^{t_1}w)^T,
\end{equation*}
for any $t_1 \in \R$.

We note that $[X_{t_1},N_{u,v,0}] = t_1 N_{u,v,0}$, and $[X_{t_1},N_{0,0,w}] = 2t_1 N_{0,0,w}$.
Let $\gamma (X_{t_1}) = t_1$.
Then the root system $\Sigma _{\mathfrak q}$ for $\mathfrak a_{\mathfrak q}$ 
is given by $\Sigma _{\mathfrak q} = \{ \pm \gamma\}$, for
$\mathbb F = \R$, and $\Sigma _{\mathfrak q} = \{ \pm \gamma\}\cup \{ \pm 2\gamma\}$, for
$\mathbb F = \C,\,\mathbb H$.
The associated nilpotent subalgebra ${\mathfrak n}_{\mathfrak q}$ is given by
${\mathfrak n}_{\mathfrak q} = \mathfrak g ^\gamma
 = \{N_{u,v,0} : u \in \mathbb F^p,\,v\in \mathbb F ^q\}$, when $\mathbb F = \R$,
and ${\mathfrak n} _{\mathfrak q}= \mathfrak g ^\gamma + \mathfrak g ^{2\gamma}
 = \{N_{u,v,w} : u \in \mathbb F^p,\,v\in \mathbb F ^q,\,w\in \Im \mathbb F\}$,
when $\mathbb F = \C,\,\mathbb H$.
Half the sum of the positive roots, $\rho _{\mathfrak q}= 
\frac 12 \sum _{\alpha \in \Sigma _{\mathfrak q} ^+} m_\alpha \alpha$, where
$m_\alpha$ is the multiplicity of the root $\alpha$, is thus
\[
\langle \rho _{\mathfrak q}, X_{t_1}\rangle = \frac 12(dp+dq + 2(d-1))t_1,
\]
where $d= \dim _{\R} \mathbb F$. Using the identification $A_{\mathfrak q} \sim \R$,
we will also sometimes use the definition $\rho _{\mathfrak q} = \frac 12(dp+dq + 2(d-1))\in \R$.

The (restricted) $\Sigma$ for $\mathfrak a$ is given by $\{\pm t_i \pm t_j\},\,
i\ne j,\,i,j \in \{1,\dots, r+1\}$, $\{\pm t_i\} ,\,i \in \{1,\dots, r+1\}$, if $p\ne q$, and
$\{\pm 2 t_i\} ,\,i \in \{1,\dots, r+1\}$, if $d\ge 2$.
Let $\alpha _{i,j}(X_t) = t_i +t_j,\,i< j$, $\beta _{i,j}(X_t) = t_i -t_j,\,i< j$, and
$\gamma _i (X_t) = t_i$.

We choose two sets of positive roots
\begin{equation*}
\Sigma ^+ = \{\alpha _{i,j},\,\beta _{i,j},\,\gamma _i,\,2\gamma _i\},
\end{equation*}
which corresponds to the (standard) ordering $t_1> t_2> \cdots > t_{r+1}$, and
\begin{equation*}
\Sigma _1^{+} = \{\alpha _{i,j},\,\gamma _i,\,2\gamma _i\}
\cup \{\beta _{i,j} : i \ne 1\}
\cup \{-\beta _{i,j} : i = 1\},
\end{equation*}
which corresponds to the ordering $ t_2 >t_3 >\cdots > t_{r+1}>t_1$. The double roots $\{\pm 2 \gamma_i\}$
are not present for $\mathbb F = \R$, the single roots $\{\pm  \gamma_i\}$
are not present when $p=q$. The associated
nilpotent subalgebras are denoted by ${\mathfrak n}$ and ${{\mathfrak n}} _1$ respectively.
The half sum of positive roots $\rho _1$ with regards to $\Sigma _1^{+}$ is given by (restricted to $A_{\mathfrak q}$)
\begin{equation*}
\langle \rho _1, X_{t_1}\rangle  = \frac 12((|dp-dq|+2(d-1))t_1.
\end{equation*}
As before, we will sometimes use the definition $\rho _{1} = \frac 12((|dp-dq|+2(d-1))\in \R$.

We note that $\gamma \in\Sigma _{\mathfrak q}^+$ is the restriction of the roots
$\{\alpha _{1,1+j},\,\gamma _1,\,\beta _{1,1+j}\}$, with $j\in\{1,\dots, r\}$,
where
\[
\mathfrak g ^{\alpha _{1,j+1}}
 = \{N_{u,v,0} : u _j = -\overline v_j,\,u_i= v_i = 0,\,i\ne j\},
\]
\[
\mathfrak g ^{\beta _{1,j+1}}
 = \{N_{u,v,0} : u _j = \overline v_j,\,u_i= v_i = 0,\,i\ne j\},
\]
and, for $p>q$,
\[
\mathfrak g ^{\gamma _{1}}
 = \{N_{u,v,0} : u = (0,\dots, 0,u_{q+1},\dots, u_p),\,v=0\},
\]
which for $p<q$ becomes $u=0,\,v= (v_q, v_{q-1},\dots,v_{p+1}, 0,\dots, 0)$.

Define $\mathfrak n^* ={\mathfrak n} _1 \cap \mathfrak n_{\mathfrak q}$ as the subalgebra
associated to the roots
$\{\alpha _{1,1+j},\,\gamma _1,\,2\gamma _1\}$. Then, for $p\ge q$
\begin{equation} \label{nforpgeq}
\mathfrak n^* = \{N_{u,v,w} : u =
(-\overline{v^r},u'),\,v\in \mathbb F ^{q},\,u'\in \mathbb F^{p-q} \},
\end{equation}
and, for $p<q$,
\begin{equation}\label{nforplq}
\mathfrak n^* = \{N_{u,v,w} : v =
(-\overline{u^r},v'),\,u\in \mathbb F ^{p},\,v' \in \mathbb F^{q-p} \},
\end{equation}
where $u^r,v^r$ means that the order of the indices is reversed. In the following
we shall by abuse of notation leave out the ${}^r$.

\begin{remark}
We have the identity $\Sigma _{\mathfrak q} ^+
 = \{\alpha \in \Sigma ^+:
\alpha _{|{\mathfrak a}_{\mathfrak q}} >0\}_{|{\mathfrak a}_{\mathfrak q}}$.
We then have the disjoint union $\Sigma _{\mathfrak q} ^+ =
\Sigma ^{++} \cup \Sigma  ^{+0}\cup \Sigma  ^{+-}$, where the second sign refers to
$\alpha _{|{\mathfrak a}_{\mathfrak h}}$.
The choice of the nilpotent subalgebra $\mathfrak n^*$
can thus be described by the correspondence $\mathfrak n^* \sim\Sigma ^{++} + \Sigma  ^{+0}$.
\end{remark}

\section{The discrete series}

From \cite[Section 8]{FJ} and \cite[Table 2]{FJO}, we have the following parametrization of the discrete series for the projective
hyperbolic spaces, with an exception for $q=d=1$:
\begin{equation*}
\{T_\lambda \,|\, \lambda = \frac 12 (dq-dp)-1+\mu _\lambda >0, \mu_\lambda \in 2\Z\}.
\end{equation*}
The spherical discrete series are given by the parameters $\lambda$ for which $\mu_\lambda\le 0$, including
the 'exceptional' discrete series corresponding to the (finitely many) parameters $\lambda>0$ for which $\mu_\lambda<0$.
We notice that spherical discrete series exists if, and only if, $d(q-p)>2$.
For $q=d=1$, the discrete series is parameterized by $\lambda \in \R\backslash\{0\}$ such that
$|\lambda|+\rho_{\mathfrak q}\in 2\Z$, and there are no spherical discrete series.

The parameter $\lambda$ is, via the formula
$\Delta f=(\lambda^2-\rho_{\mathfrak q} ^2)f$, related to the eigenvalue of the
Laplace-Beltrami operator $\Delta$ of $G/H$ on  functions $f$ in the
corresponding representation space in $L^2(G/H)$
(with suitable normalization of~$\Delta$). Using \cite[Theorem~5.1]{FJO} (see \cite[Proposition~3.2]{AF-JS} for more details),
we can explicitly describe the discrete series
by generating functions $\psi _\lambda$ as follows.
Let $s =s_1\in  \R$ describe the elements $a_s=a_{s_1}\in A_{\mathfrak q}$. Let $\lambda$ be a discrete series parameter.
For $\mu_\lambda \ge 0$, we have
\begin{equation*}
\psi _\lambda (k a_s H) =\psi _\lambda (x(s,y))=
\phi_{\mu _\lambda}(k) (\cosh s)^{-\lambda -\rho_{\mathfrak q}},
\end{equation*}
where $\phi_{\mu _\lambda}$ is a $K\cap H$-invariant zonal spherical function, in particular $\phi_{0}=1$.
For $\mu_\lambda=-2m \le 0$, we have
$$ \psi_\lambda (k a_s H) =
P_\lambda(\cosh ^2s) (\cosh s)^{-\lambda-\rho_{\mathfrak q}-2m},
$$
where $P_\lambda$ is a polynomial of degree $m$. For $q=d=1$, 
consider the one-parameter subgroup $T=\{k_\theta\}\subset K_2$ defined by
\[k_{\theta} =
\left(
\begin{array}
[c]{cccc}%
I_{p+1} & 0 &  0 \\
0& \cos\theta  & \sin\theta\\
0& -\sin\theta & \cos\theta
\end{array}
\right) ,
\]
where $I_j$ denotes the identity matrix of size $j$,
then $\psi _\lambda (k_\theta a_s H)=
e^{im \theta} (\cosh s)^{-|\lambda| -\rho_{\mathfrak q}}$, with
$m=\lambda \pm \rho_{\mathfrak q}$, and the sign determined by the sign of $\lambda$. See \cite[Section~3]{AF-JS} for further details.

\section{Schwartz functions}

In this section we recall some results from \cite[Chapter\,17]{vdb} regarding
$L^2$-Schwartz functions on $G/H$. Let $\Xi$ denote Harish-Chandra's bi-$K$-invariant
elementary spherical function $\varphi_0$ on $G$, and define the real analytic function
$\Theta: G/H\to\R ^+$ by
\begin{equation*}
\Theta (x) =
\sqrt{\Xi(x\sigma(x)^{-1})}\qquad (x\in G).
\end{equation*}
We notice that there exists a positive constant $C$, and a positive integer $m$, such that
\begin{equation}\label{HCest}
a^{-\rho_{\mathfrak q}} \le \Theta (a) \le C a^{-\rho _{\mathfrak q}} ( 1 +|a|)^m,\qquad
(a\in \overline{A_{\mathfrak q}^+}).
\end{equation}
Here we use the definition $a^\lambda =  e^{\langle \lambda ,\log a\rangle}$, for $a\in {A_{\mathfrak q}^+},\,
\lambda \in {\mathfrak a_{\mathfrak q}}_\C^*$.

The space $\Sch (G/H)$ of $L^2$-Schwartz functions on $G/H$ can be defined as the space of all smooth
functions on $G/H$ satisfying
\begin{equation*}
\mu ^2 _{n,D} (f)= \sup _{x\in G/H} \Theta ^{-1} (x) ( 1 +|x|)^n
|f(D,x)| < \infty,
\end{equation*}
for all $n\in \N\cup \{0\}$ and $D\in U(\mathfrak g)$.

Let $f\in \Sch(G/H)$.
Let $S\subset G$ be a compact set. Then, for any $n\in \N\cup \{0\}$, there exists a positive
constant $C$, such that
\begin{equation} \label{Schestimate}
|f(g\cdot x)| \le C\, \Theta (a(x))   ( 1 +|x|)^{-n},\qquad (g\in S,\,x\in G/H).
\end{equation}

\section{A Radon transform and an Abel transform}

Let $N^* = \exp(\mathfrak n^*)$ and $N_1 =\exp({\mathfrak n} _1)$ denote
the two nilpotent subgroups generated by $\mathfrak n^*$ and
${\mathfrak n} _1$ respectively.
For functions on $G/H$ we define, assuming convergence,
\begin{equation} \label{defradon} Rf(g) = \int _{N^*} f(g n^*H)\, dn^*  \qquad (g\in G).
\end{equation}
Let $H^A$ denote the centralizer of $A$ in $H$. Then $Rf(gm)=Rf(g), \, m\in H^A$, and

\begin{theorem}\label{thm1}
Let $f\in \Sch(G/H)$.
\begin{enumerate}
\item[(i)] The integral defining the Radon transform $R$ converges uniformly on compact sets.

\item[(ii)] $Rf \in C^\infty (G/H^A N_1)$.

\item[(iii)] The Radon transform is $G$- and $\mathfrak g$-equivariant.
\end{enumerate}
\end{theorem}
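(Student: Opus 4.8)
The plan is to establish (i) first, since it carries essentially all of the analysis; parts (ii) and (iii) then follow by differentiating under the integral sign and by elementary invariance considerations. For (i), I would fix a compact set $S\subset G$ and bound the integrand of \eqref{defradon} pointwise. For $g\in S$ and $x=n^*H$, the Schwartz estimate \eqref{Schestimate} gives, for every $n$, a constant $C=C(S,n)$ with $|f(gn^*H)|\le C\,\Theta(a(n^*H))(1+|n^*|)^{-n}$, and \eqref{HCest} turns this into $C'e^{-\rho_{\mathfrak q}|n^*|}(1+|n^*|)^{m-n}$. The exponent $|n^*|$ is read off from \eqref{expN} via $\cosh^2|x|=|x_{p+2}|^2+\cdots+|x_{p+q+2}|^2$: substituting \eqref{nforpgeq} (and \eqref{nforplq} when $p<q$) one finds $\cosh^2|n^*|=|v|^2+(1+\tfrac12|u'|^2)^2+|w|^2$. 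Everything then reduces to the convergence, uniformly in $g\in S$, of $\int_{N^*}e^{-\rho_{\mathfrak q}|n^*|}(1+|n^*|)^{m-n}\,dn^*$.

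The hard part, which I expect to be the one genuine obstacle of the whole theorem, is that this integral is only \emph{borderline} convergent on the strength of the exponential factor alone. Parametrizing $N^*$ by $(u',v,w)$ with $dn^*$ Lebesgue, and using $\cosh|n^*|\gtrsim 1+|v|+|u'|^2+|w|$, the purely exponential integrand $(\cosh|n^*|)^{-\rho_{\mathfrak q}}$ integrates — after the $w$-, then $u'$-, then $v$-integrals — to an outer radial integral of the shape $\int_1^\infty\rho^{-1}\,d\rho$, reflecting the fact that $\rho_{\mathfrak q}=\tfrac12(dp+dq+2(d-1))$ is exactly tuned to the (graded) volume growth of $N^*$, so that the exponential decay by itself is critical. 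This logarithmic divergence is killed by the Schwartz factor: since $|n^*|=\arccosh(\cosh|n^*|)$ grows like $\log\rho$, the factor $(1+|n^*|)^{-n}$ contributes $(\log\rho)^{-n}$, and any $n\ge 2$ makes $\int_1^\infty\rho^{-1}(\log\rho)^{-n}\,d\rho$ converge. This is precisely why the Schwartz hypothesis, rather than mere temperedness, is indispensable, and it proves (i).

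For (ii), smoothness comes from differentiating under the integral. Infinitesimal left translations (right-invariant vector fields) do not meet the integration variable in \eqref{defradon}, so for $X\in\mathfrak g$ one has $X(Rf)=R(Xf)$ with $Xf\in\Sch(G/H)$, the left regular action preserving $\Sch(G/H)$ as reflected in \eqref{Schestimate}; applying (i) to $Xf$ and its iterates gives uniform convergence on compact sets, hence $Rf\in C^\infty(G)$. That $Rf$ descends to $G/H^AN_1$ I would then get by combining three facts: right invariance under $N^*$ (the unimodular group $N^*$ carries a two-sided invariant $dn^*$); right invariance under $H^A$ (the remark preceding the statement, valid because $H^A$ centralizes $\mathfrak a$ and so acts on $N^*$ by measure-preserving conjugation); and invariance under the directions of $N_1$ transverse to $N^*$, which by the correspondence $\mathfrak n^*\sim\Sigma^{++}+\Sigma^{+0}$ and the attendant decomposition of $\mathfrak n_1$ from \cite{AF-JS} are absorbed into $H^AN^*$ modulo $H$. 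This last step is routine bookkeeping, but it genuinely relies on the root data set up in \cite{AF-JS}.

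Finally, (iii) is formal. For left translation $\ell(g_0)$ one computes $R(\ell(g_0)f)(g)=\int_{N^*}f(g_0^{-1}gn^*H)\,dn^*=(\ell(g_0)Rf)(g)$, since $g_0$ stays outside the integral; this is the $G$-equivariance. Differentiating in $g_0$ yields the $\mathfrak g$-equivariance, the interchange of derivative and integral being justified exactly as in (ii) — it is the identity $X(Rf)=R(Xf)$ already established. Thus the only delicate point is the borderline estimate in (i): the Harish-Chandra bound \eqref{HCest} alone gives a divergent integral, and it is solely the arbitrary polynomial decay built into $\Sch(G/H)$ that rescues the convergence.
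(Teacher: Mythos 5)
Your argument for (i) is essentially identical to the paper's: both reduce to an explicit integral over the $(u',v,w)$-coordinates of $N^*$, observe that $\rho_{\mathfrak q}=\tfrac12(dp+dq+2(d-1))$ makes the exponential decay exactly critical against the graded volume growth of $N^*$ (the paper's borderline conditions $k=4a-2l$ and $l\le 2a$ after the substitution $y=\sqrt{1+|x|^4}\,z$), and both rescue the logarithmically divergent radial integral with the $(1+|n^*|)^{-n}$ Schwartz factor via $\log s\le\arccosh s\le \log s+\log 2$. The paper's displayed proof in fact only treats the convergence statement (i), so your routine sketches of (ii) and (iii) -- differentiation under the integral, invariance of $dn^*$, the $H^A$-remark, and the root-space bookkeeping for $N_1$ deferred to \cite{AF-JS} -- fill in exactly what the paper leaves implicit.
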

\begin{proof}
We first assume $p\ge q$.
Let $f\in \Sch(G/H)$, and fix a compact set $S\subset G$. Let $n\in \N$. Then
\begin{equation}\label{1}
\int _{N^*} |f(gn^*H)| dn^*\le C \int _{N^*} a(n^*)^{-\rho_{\mathfrak q}} (1+|n^*|)^{-n+m} dn^*
\qquad (g\in S),
\end{equation}
for the constants $C$ and $m$ given by (\ref{HCest}) and (\ref{Schestimate}).
From (\ref{expN}) and (\ref{nforpgeq}), we have
\[
\cosh ^{2} (| \exp (N _{(v,u'),v,w})|)
= (1+ 1/2 |u'| ^2)^2 +|v|^2 + |w|^2.
\]
Using that $\log s \le\arccosh  s\le \log s + \log 2$, when $s\ge 1$, we see that
the last integral in (\ref{1}) is bounded by
\begin{align*}
 C\int _{\R ^{dp-dq} \times \R^{dq}\times \R^{d-1}}& ((1+1/2 |u'| ^2)^2 +|v|^2+|w|^2)^{-\frac {dp+dq+2(d-1)}4}\\
 &\times
(1+ \log ((1+1/2 |u'| ^2)^2 +|v|^2+|w|^2))^{-n+m} du' dv dw,
\end{align*}
where $C$ is a positive constant.

Consider the integral ($x\in\R ^{k},\,y\in \R^{l}$), with $n>2$,
\begin{equation*}
 \int _{\R ^{k} \times \R^{l}} (1+ |x| ^4+ |y|^2)^{-a}
(1+ \log (1+ |x| ^4+ |y|^2))^{-n} dx dy.
\end{equation*}
With the substitution $y = \sqrt {1+|x| ^4} z \in \R ^l$,  we get
\begin{align*}
& \int _{\R ^{k}\times \R^{l}} (1+ |x| ^4)^{-a+\frac l2}(1+ |z| ^2)^{-a}
(1+ \log (1+ |x| ^4)+ \log (1+|z|^2))^{-n} dx dz\le \\
&\int _{\R ^{k}}(1+ |x| ^4)^{-a+\frac l2}(1+ \log (1+ |x| ^4))^{-\frac n2} dx
\int _{\R ^{l}} (1+ |z| ^2)^{-a}(1+ \log (1+ |z| ^2))^{-\frac n2}dz,
\end{align*}
which is finite if, and only if, $k\le 4a-2l$ and $l\le 2a$.

We have $k=dp-dq,\,l=dq+d-1$ and $a = (dp+dq+2(d-1))/4$,
whence $k = 4a-2l$ and $l\le 2a$, and the integral
(\ref{defradon}) converges uniformly on compact sets.

In the $p<q$ case, we see from (\ref{expN}) and (\ref{nforplq}), that
$\cosh ^{2} (|\exp (N _{u,(u,v'),w})|)=
 |v'|^2 + |u^2| +(1-1/2 |v'|^2)^2 + |w|^2 = 1+|u|^2+1/4 |v'|^4 +|w|^2$,
and we proceed as before, reversing the roles of $u$ and $v$.
\end{proof}

We define the Abel transform $\A$ by $\A f(a)=a^{\rho_1} Rf(a)$, for $a\in A_{\mathfrak q}$.

\begin{theorem}\label{thm2}
Let $g\in G$ and $f\in \Sch(G/H)$.
Let $\Delta$ denote the Laplace--Beltrami operator on $G/H$ and let
$\Delta _{A_{\mathfrak q}}$ denote the
Euclidean Laplacian on $A_{\mathfrak q}$.
Then
\begin{equation}\label{exchange}
\A (\Delta f) = (\Delta _{A_{\mathfrak q}}- \rho_{\mathfrak q} ^2)\A f \qquad (a\in A_{\mathfrak q}).
\end{equation}
\end{theorem}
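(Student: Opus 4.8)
The plan is to establish the intertwining relation \eqref{exchange} by exploiting the radial structure of the Radon transform together with the $\mathfrak{g}$-equivariance from Theorem~\ref{thm1}(iii). The key observation is that the Laplace--Beltrami operator $\Delta$ is built from the Casimir element of $U(\mathfrak{g})$, and when we integrate over $N^*$ the action of $\Delta$ should project down to a radial differential operator on $A_{\mathfrak{q}}$. First I would write $\Delta$ in terms of the radial coordinate $s = t_1$ using the spherical coordinates introduced above and the Iwasawa-type decomposition adapted to $\mathfrak{n}_1$, so that $G = K\overline{A_{\mathfrak{q}}^+}H$ near the regular set $\X'$. Since $Rf$ is already known to descend to a function on $G/H^A N_1$ by Theorem~\ref{thm1}(ii), and is $H^A$-invariant, the transform $\A f(a) = a^{\rho_1} Rf(a)$ really is a function of the single variable $s$, and I only need to track how $\Delta$ interacts with the $N^*$-integration and the $a^{\rho_1}$ twist.

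The main computational engine is the classical formula expressing the radial part of $\Delta$ in the $N_1 A_{\mathfrak{q}} H^A$-type coordinates. Concretely, I would use that the Casimir, restricted to $N_1$-averaged functions and after the $\rho_1$-shift, conjugates to the flat Laplacian $\Delta_{A_{\mathfrak{q}}}$ shifted by the constant $-\rho_{\mathfrak q}^2$; this is the standard mechanism by which the Harish-Chandra/Abel transform trivializes the radial Laplacian, and it is why the shift is by $\rho_{\mathfrak q}^2$ (the square of the half-sum of roots for $\mathfrak{a}_{\mathfrak q}$) and not $\rho_1^2$. The step I would carry out carefully is the decomposition
\begin{equation*}
\Delta = \Delta_{A_{\mathfrak q}} + (\text{first order term in } \partial_s) + (\text{terms from } \mathfrak{n}_1 \text{ and } \mathfrak{k}),
\end{equation*}
where the first-order term involves $\langle 2\rho_{\mathfrak q}, X_{t_1}\rangle \partial_s$ from the Jacobian of the radial measure, and the remaining terms annihilate $N^*$-averages after integration by parts over $N^*$ (using that $\mathfrak{n}^* = \mathfrak{n}_1 \cap \mathfrak{n}_{\mathfrak q}$ sits inside $\mathfrak{n}_1$).

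The crux of the argument is justifying that the $N^*$-integration commutes with $\Delta$ and that the non-radial pieces of $\Delta$ integrate to zero. For the first point I would invoke Theorem~\ref{thm1}(iii): the $\mathfrak{g}$-equivariance of $R$ means that for $X \in \mathfrak{g}$ we have $R(Xf) = X(Rf)$ as distributions on $G/H^A N_1$, so applying the Casimir element $\Omega$ gives $R(\Delta f) = \Omega(Rf)$, and I then identify $\Omega$ acting on right-$N_1 H^A$-invariant functions with a radial operator on $A_{\mathfrak q}$. For the vanishing of cross terms I would use integration by parts over the nilpotent group $N^*$, together with the bracket relations $[X_{t_1}, N_{u,v,0}] = t_1 N_{u,v,0}$ and $[X_{t_1}, N_{0,0,w}] = 2t_1 N_{0,0,w}$ stated above, which control how the root vectors scale and ensure the boundary terms vanish by the Schwartz decay from \eqref{Schestimate}. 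The hard part, and the step I expect to demand the most care, is precisely this commutation and the careful bookkeeping of the first-order radial term: I must verify that after the $a^{\rho_1}$ conjugation the first-derivative term is exactly cancelled (completing the square in $\partial_s$ produces the constant $-\rho_{\mathfrak q}^2$), and that all remaining terms either vanish upon $N^*$-integration or are absorbed into $\Delta_{A_{\mathfrak q}}$, with the convergence of every integral guaranteed by the estimates proved in Theorem~\ref{thm1}.
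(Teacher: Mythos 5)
Your overall strategy --- left $\mathfrak g$-equivariance of $R$ from Theorem~\ref{thm1}\,(iii), a decomposition of the Casimir, integration by parts over $N^*$, and conjugation by the $a^{\rho_1}$ twist --- is the right family of ideas, and is essentially what the paper's one-line proof delegates to \cite[Lemma~2.4]{AF-JS}. But your bookkeeping contains a genuine error, and the argument as written cannot produce \eqref{exchange}. You assert (a) that the first-order radial term has coefficient $2\rho_{\mathfrak q}$, and (b) that \emph{all} remaining non-radial terms of the Casimir annihilate $N^*$-averages, so that the $a^{\rho_1}$ conjugation cancels the first-order term and ``completing the square'' yields $-\rho_{\mathfrak q}^2$. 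These two claims are mutually inconsistent. Indeed, if $L$ denotes the untwisted radial operator, defined by $R(\Delta f)=L\,Rf$ on $A_{\mathfrak q}$, then for the theorem to hold one must have
\begin{equation*}
L=e^{-\rho_1 s}\bigl(\partial_s^2-\rho_{\mathfrak q}^2\bigr)e^{\rho_1 s}
=\partial_s^2+2\rho_1\partial_s+\bigl(\rho_1^2-\rho_{\mathfrak q}^2\bigr).
\end{equation*}
So the first-order coefficient must be $2\rho_1$, not $2\rho_{\mathfrak q}$ (the latter is the asymptotic coefficient of the \emph{Cartan}-radial part of $\Delta$ in $G=K\overline{A_{\mathfrak q}^+}H$, which is not the decomposition adapted to $R$), and there must be a zeroth-order term $\rho_1^2-\rho_{\mathfrak q}^2$, which is nonzero whenever $\min(p,q)>0$ and cannot arise from completing a square. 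Under your assumptions one obtains after the twist either $\Delta_{A_{\mathfrak q}}-\rho_1^2$, or an operator still containing $2(\rho_{\mathfrak q}-\rho_1)\partial_s$; neither is \eqref{exchange}.

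The missing idea is the source of this constant: it is not true that all cross terms of the Casimir integrate to zero over $N^*$. Only the root vectors spanning $\mathfrak n^*$, i.e.\ those attached to $\Sigma^{++}\cup\Sigma^{+0}$, give exact derivatives along $N^*$ and die after integration. The root spaces attached to $\Sigma^{+-}$ --- the part of $\mathfrak n_{\mathfrak q}$ complementary to $\mathfrak n^*$, namely the $\beta_{1,j}$-directions, whose restricted multiplicities account precisely for the discrepancy $\rho_{\mathfrak q}-\rho_1$ --- must instead be handled by rewriting them modulo $\mathfrak h$ and using the right $H$-invariance of $f$; the commutators this generates land in $\mathfrak a_{\mathfrak q}\oplus\mathfrak m$ and are exactly what corrects the first-order coefficient from $2\rho_{\mathfrak q}$ to $2\rho_1$ and produces the scalar $\rho_1^2-\rho_{\mathfrak q}^2$. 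Your sketch contains no mechanism capable of generating this contribution, so the gap is substantive rather than cosmetic. (Your use of the estimates from Theorem~\ref{thm1} to justify differentiating under the integral sign and discarding boundary terms is, by contrast, fine.)
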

\begin{proof}
See \cite[Lemma 2.4]{AF-JS}, and the discussion before and after this lemma.
\end{proof}

Let $\psi _\lambda$ belong to the discrete series with parameter $\lambda$. Since
$\Delta\psi_\lambda= (\lambda^2-\rho_{\mathfrak q}^2)\psi_\lambda $,
we see that $\A \psi_\lambda$ is an eigenfunction for the Euclidean Laplacian
$\Delta _{A_{\mathfrak q}}$ on $A_{\mathfrak q}$ with the eigenvalue $\lambda ^2$.
This implies in particular that $s\mapsto R\psi_\lambda(a_s)$ is a linear combination of
$e^{(\lambda-\rho _1)s}$ and $e^{(-\lambda-\rho _1)s}$.

\section{The main result}

Here we state the main theorem, to be proven in the following sections.
We will in particularly be interested in the values of
$Rf$ on the elements $a_s\in A_{\mathfrak q}$, so for simplicity we write
$Rf(s) = Rf(a_s)$, and, similarly, $\A f(s)= \A f (a_s)$.

Let $R>0$, and let $C_R^\infty (G/H)$ denote the subspace of smooth functions on $G/H$
with support inside the ($K$-invariant) `ball' of radius $R$.
Let similarly $C_R^\infty (\R)$ denote the subspace of smooth functions on $\R$
with support inside $[-R,R]$. Finally, let $ \S (\R)$ denote the Schwartz functions on $\R$.

\begin{theorem}\label{Main-thm}
Let $G/H$ be a projective hyperbolic space over $\R, \, \C, \, \Ha$, with $p\ge 0, \, q\ge 1$, or over $\Ca$, with  $p=0,\, q=1$.
\begin{enumerate}
\item[(i)] If $d(q-p) \le 2$, then all discrete series are cuspidal.
\item[(ii)]If $d(q-p) > 2,$ then non-cuspidal discrete series exists,
given by the parameters $\lambda > 0$ with $\mu_\lambda \le 0$.
More precisely, if $0 \ne f\in \Sch(G/H)$ belongs to $T_\lambda$, then $\A f(s)=C e^{\lambda s}$, with $C \ne 0$.
\item[(iii)] $T_\lambda$ is non-cuspidal if and only if $T_\lambda$ is spherical.
\item[(iv)] If $p\ge q$, and $f\in C^\infty _R (G/H)$, for $R>0$, then $\A f \in C_R^\infty (\R)$.
\item[(v)] If $d(q-p) \le 1$, and $f\in \Sch(G/H)$, then $\A f \in \S (\R)$.
\item[(vi)] Assume $d(q-p) > 1$. Let $D$ be the $G$-invariant differential operator
$\Delta_\rho (\Delta_\rho - \lambda_1^2) \dots (\Delta_\rho - \lambda_r^2)$,
where $\lambda_1, \dots, \lambda_r$ are the parameters of the non-cuspidal discrete series,
and $\Delta_\rho = \Delta + \rho_{\mathfrak q}^2$. Then $\A(Df) \in \S (\R)$, for $f$ in a dense subspace of 
$\Sch(G/H)$.
\end{enumerate}
\end{theorem}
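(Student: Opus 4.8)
\textbf{Proof proposal for Theorem~\ref{Main-thm}.}

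The plan is to treat the six parts in the natural order in which they build on one another, since parts (ii)--(iii) rest on an explicit computation of the Radon transform of the generating functions, while parts (iv)--(vi) are decay statements about the Abel transform that exploit the intertwining relation of Theorem~\ref{thm2}. First I would compute $R\psi_\lambda(a_s)$ explicitly for a discrete series generating function. From the discussion at the end of Section~5 we already know that $s\mapsto R\psi_\lambda(a_s)$ is a linear combination of $e^{(\lambda-\rho_1)s}$ and $e^{(-\lambda-\rho_1)s}$, so everything reduces to determining the two coefficients. Substituting the spherical-coordinate expression for $\psi_\lambda$ into the defining integral \eqref{defradon} over $N^*$, and using the formulae \eqref{aexpN}, \eqref{nforpgeq}, \eqref{nforplq} for $a_{t_1}\exp(N_{u,v,w})\cdot x_0$, turns the group integral into a concrete integral over $\R^{dp}\times\R^{dq}\times\R^{d-1}$. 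Evaluating it (for $\mu_\lambda\le 0$ the polynomial factor $P_\lambda$ keeps it elementary, and I expect a Beta/Gamma-type evaluation as in the real case of \cite{AF-JS}) produces $\A f(s)=Ce^{\lambda s}$ with an explicitly computable constant $C$; this is exactly the content of (ii) once one checks $C\neq 0$, and it shows the spherical discrete series are non-cuspidal.

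For part (i), the same integral must be shown to vanish identically when $d(q-p)\le 2$; equivalently, the coefficient of $e^{(\lambda-\rho_1)s}$ and the coefficient of $e^{(-\lambda-\rho_1)s}$ both vanish. The clean way to organize this is to note that by Theorem~\ref{thm2} the function $\A\psi_\lambda$ is an eigenfunction of $\Delta_{A_{\mathfrak q}}-\rho_{\mathfrak q}^2$ with eigenvalue $\lambda^2$, hence a combination of $e^{\pm\lambda s}$; the square-integrability of $\psi_\lambda$ and the $\rho_1$-shift force the surviving exponential to be the one that decays, and a dimension count comparing $\rho_{\mathfrak q}$, $\rho_1$, and $\lambda$ shows no decaying nonzero solution survives precisely when $d(q-p)\le 2$. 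Part (iii) is then the combination of (i), (ii), and the parametrization in Section~3: spherical discrete series exist iff $d(q-p)>2$ and correspond exactly to $\mu_\lambda\le 0$, which by (ii) are exactly the non-cuspidal ones.

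Parts (iv)--(vi) are the support- and Schwartz-regularity statements. For (iv), when $p\ge q$ the change of variables in the proof of Theorem~\ref{thm1} shows that $|n^*|$ is a proper function of the Euclidean variables $(u',v,w)$, and the identity $\cosh^2(|\exp N_{(v,u'),v,w}|)=(1+\tfrac12|u'|^2)^2+|v|^2+|w|^2$ implies that points $gn^*H$ stay inside the ball of radius $R$ only for $(u',v,w)$ in a compact set depending on $a_s$; combined with compact support of $f$ this confines $s$ to $[-R,R]$ and gives smoothness by differentiation under the integral sign, so $\A f\in C_R^\infty(\R)$. For (v), when $d(q-p)\le 1$ the Harish-Chandra estimate \eqref{HCest} together with the Schwartz estimate \eqref{Schestimate} makes the integrand in \eqref{1} decay fast enough that one may differentiate in $s$ arbitrarily often and bound $(1+|s|)^N \A f^{(j)}(s)$ uniformly; the point is that the exponent $a=(dp+dq+2(d-1))/4$ dominates $\rho_1$ in this range, so no boundary exponential survives and rapid decay follows. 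Part (vi) is the subtle one and, I expect, the main obstacle: for $d(q-p)>1$ the naive Abel transform need not be Schwartz because the non-decaying exponentials $e^{\lambda_i s}$ attached to the non-cuspidal discrete series obstruct rapid decay. The idea is that applying the operator $D=\Delta_\rho(\Delta_\rho-\lambda_1^2)\cdots(\Delta_\rho-\lambda_r^2)$ and using the intertwining relation \eqref{exchange} converts $D$ into the Euclidean operator $\Delta_{A_{\mathfrak q}}(\Delta_{A_{\mathfrak q}}-\lambda_1^2)\cdots(\Delta_{A_{\mathfrak q}}-\lambda_r^2)$ acting on $\A f$, which annihilates precisely the exponentials $e^{\pm\lambda_i s}$ and the constant; on the dense subspace of $\Sch(G/H)$ where $\A f$ is a finite combination of such exponentials plus a genuinely Schwartz remainder, $\A(Df)$ is therefore the Schwartz remainder alone. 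The hard part is to identify the correct dense subspace and to verify that after removing the finitely many obstructing exponentials the remaining Abel transform is truly rapidly decreasing; here I would lean on the split-rank-one argument indicated in the acknowledgement (the discussion with Job Kuit), reducing to a one-dimensional Fourier/Abel analysis where the exponents $\pm\lambda_i$ are the only possible non-Schwartz frequencies.
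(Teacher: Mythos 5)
Your architecture matches the paper's in outline (explicit evaluation of $R\psi_\lambda$ for (ii), the two-exponential consequence of Theorem~\ref{thm2}, support bounds for (iv), and killing the obstructing exponentials with $D$ for (vi)), but two of your steps would not go through as described. For (i): arguing that ``square-integrability of $\psi_\lambda$ and the $\rho_1$-shift force the surviving exponential to be the one that decays,'' plus a ``dimension count,'' does not determine the two coefficients. A nonzero combination of $e^{\lambda s}$ and $e^{-\lambda s}$ always decays at one end and grows at the other, so you must control $R\psi_\lambda(a_s)$ at \emph{both} $s\to+\infty$ and $s\to-\infty$, and square-integrability of $\psi_\lambda$ on $G/H$ gives no direct bound on the $N^*$-integral. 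The paper uses two different mechanisms: for $p\ge q$ it first proves $\A f\in\S(\R)$ for \emph{all} Schwartz $f$ (Proposition~\ref{thm1-1}), and a Schwartz eigenfunction of $\Delta_{A_{\mathfrak q}}$ must vanish; for $p<q$ with $d(q-p)\le 2$ it observes that every discrete series parameter then has $\mu_\lambda>0$, and derives from the explicit ${}_2F_1$ evaluation the two-sided bounds $|R\psi_\lambda(a_s)|\le Ce^{-ds}$ at $+\infty$ and $\le Ce^{(\mu_\lambda-d)s}$ at $-\infty$ (Proposition~\ref{newproof}), which kill both exponentials. Without that explicit computation you also do not get the nonvanishing $C\ne 0$ in (ii): for $\mu_\lambda=-2m<0$ the polynomial factor $P_\lambda$ makes $\psi_\lambda$ a sum of several $\tilde\psi_{\lambda+2j}$, and one must check that the resulting sum of powers of $(1+e^{-2s})$ cannot cancel; the paper does this by combining the explicit leading term with the two-exponential structure. (A small slip: the $N^*$-integral runs over $\R^{dp}\times\R^{d(q-p)}\times\R^{d-1}$, not $\R^{dp}\times\R^{dq}\times\R^{d-1}$.)

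For (vi) you have named the strategy but deferred precisely the part that constitutes the proof: showing that after $D$ removes the exponentials the remainder is rapidly decreasing, and exhibiting the dense subspace. Note that $\A f$ is not a priori ``a finite combination of exponentials plus a genuinely Schwartz remainder''; that decomposition is exactly what has to be established, and only holds (by the paper's argument) for the $G$-span of $K$-irreducible $(K\cap H)$-invariant functions, which is shown to be dense by an averaging argument. For such $f$ the paper uses $G=(K\cap H)TAH$ to reduce to terms $(\cos\theta)^m h(a_t)$, rewrites $F(z)=e^{ds}Rh(s)$ as an explicit integral in $z=e^{-s}$, Taylor-expands at $z=0$, identifies the Taylor coefficients with the coefficients of the obstructing powers $z^{-(\rho_1-d-2j)}$ (these exponents are exactly the non-cuspidal parameters $\lambda_j$), and then estimates the remainder term directly via Lemma~\ref{diff kernel2} and a logarithmic integral bound; the behavior at $-\infty$ requires the separate input of \cite{AF-JS}. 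An appeal to an unspecified split-rank-one argument does not replace this analysis, so as written part (vi) — and with it the full strength of (i)--(ii) — remains unproved in your proposal.
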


\begin{remark}
The theorem also holds for the non-projective spaces $SO(p+1,q+1)_e / SO(p+1,q)_e$, except for item (iii),
due to the existence of non-cuspidal non-spherical discrete series, corresponding to the parameters $\lambda > 0$,
with $\mu_\lambda \in 2 \Z+1$ and $\mu_\lambda < 0$.
\end{remark}

\begin{remark}
The conditions in item (vi) essentially state that $\A f$ is a Schwartz function if $f$ is perpendicular to all non-cuspidal discrete series.
The factor $\Delta_\rho$, however, cannot be avoided, except in the cases $d=1$ and $q-p$ odd.
\end{remark} 

\begin{remark}
For the exceptional case, only (ii), (iii) and (vi) are relevant. The spherical discrete series corresponds to
$\lambda=3 \,(\mu_\lambda=0)$ and $\lambda=1\, (\mu_\lambda=-2)$.
\end{remark}

\section{Proof of the main theorem for $p\ge q$}

\begin{proposition}\label{thm1-1}
Let $p\ge q$.
\begin{enumerate}
\item[(i)] Let $f\in C^\infty _R (G/H)$, for $R>0$. Then $\A f \in C_R^\infty (\R)$.

\item[(ii)] Let $f\in\Sch(G/H)$. Then $\A f \in \S (\R)$.

\end{enumerate}
\end{proposition}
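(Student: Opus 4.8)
The plan is to convert the group integral defining $Rf$ into a Euclidean integral over the coordinates of $N^*$, and then to piggyback on the convergence estimate already carried out in the proof of Theorem~\ref{thm1}. For $p\ge q$ I parametrise $N^*$ by $(u',v,w)\in\R^{d(p-q)}\times\R^{dq}\times\R^{d-1}$ via (\ref{nforpgeq}), so that $n^*=\exp N_{u,v,w}$ with $u=(-\overline v,u')$ and hence $|u|^2-|v|^2=|u'|^2$. Feeding this into (\ref{aexpN}) and using $\cosh^2|x|=|x_{p+2}|^2+\dots+|x_{p+q+2}|^2$ together with $w\in\Im\Fi$, the first step is to record
\[
\cosh^2\!\bigl(|a_s\,n^*H|\bigr)=\bigl(\cosh s+\tfrac12 e^s|u'|^2\bigr)^2+|v|^2+e^{2s}|w|^2 .
\]
Every summand is nonnegative and the first is $\ge\cosh^2 s$, so $|a_sn^*H|\ge|s|$ for all $n^*$ and all $s$. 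Part~(i) is then immediate: if $\supp f\subset\{|x|\le R\}$ the integrand of $Rf(s)=\int_{N^*}f(a_sn^*H)\,dn^*$ vanishes for $|s|>R$, whence $\A f(s)=e^{\rho_1 s}Rf(s)$ is supported in $[-R,R]$; its smoothness is the restriction to $A_{\mathfrak q}$ of the smoothness asserted in Theorem~\ref{thm1}(ii).

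For part~(ii) I would first bound $f$ pointwise. Taking $g=e$ in (\ref{Schestimate}) and then using (\ref{HCest}) and $e^{-|x|}\le(\cosh|x|)^{-1}$ gives, for every $N$,
\[
|f(x)|\le C\,\Theta(a(x))(1+|x|)^{-N}\le C\,(\cosh|x|)^{-\rho_{\mathfrak q}}(1+|x|)^{-N}.
\]
Writing $\rho_{\mathfrak q}=2a$ with $a=(dp+dq+2(d-1))/4$ as in Theorem~\ref{thm1}, I insert the formula above for $\cosh^2|a_sn^*H|$ and rescale the $N^*$-variables to extract the $s$-dependence. For $s\ge0$ the substitution $u'=\sqrt{1+e^{-2s}}\,\hat u$, $v=\cosh s\,\hat v$, $w=\tfrac12(1+e^{-2s})\,\hat w$ turns the expression into $\cosh^2 s\,[(1+|\hat u|^2)^2+|\hat v|^2+|\hat w|^2]$, contributes a Jacobian $\asymp(\cosh s)^{dq}$, and through $(\cosh^2 s)^{-a}$ a factor $(\cosh s)^{-\rho_{\mathfrak q}}$; the net power is exactly $(\cosh s)^{-\rho_1}$, which is absorbed by the factor $e^{\rho_1 s}$ of the Abel transform since $e^{\rho_1 s}(\cosh s)^{-\rho_1}$ is bounded for $s\ge0$. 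An analogous rescaling treats $s\le0$, where the substitution produces a Jacobian growing like $e^{\rho_1|s|}$ that is exactly cancelled by $e^{\rho_1 s}=e^{-\rho_1|s|}$. In both cases what remains is the $s$-independent integral
\[
\int\bigl((1+|\hat u|^2)^2+|\hat v|^2+|\hat w|^2\bigr)^{-a}(1+|x|)^{-N}\,d\hat u\,d\hat v\,d\hat w ,
\]
with $(\hat u,\hat v,\hat w)$ in dimensions $k=d(p-q)$, $dq$ and $d-1$, which is precisely the integral shown to be finite in the proof of Theorem~\ref{thm1} in the critical case $k=4a-2l$, $l\le2a$.

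To obtain rapid decay rather than boundedness I would keep track of $|x|=\arccosh(\cosh s\cdot r)$, where $r=\sqrt{(1+|\hat u|^2)^2+|\hat v|^2+|\hat w|^2}\ge1$; since $\arccosh(\cosh s\cdot r)\gtrsim|s|+\log r$, splitting $(1+|x|)^{-N}\le C(1+|s|)^{-N/2}(1+\log r)^{-N/2}$ again leaves a convergent integral of the Theorem~\ref{thm1} type and yields $|\A f(s)|\le C_N(1+|s|)^{-N/2}$ for every $N$. This handles the function $\A f$ for an arbitrary $f\in\Sch(G/H)$. For the derivatives I invoke the intertwining relation (\ref{exchange}): it gives $\partial_s^{2}\A f=\Delta_{A_{\mathfrak q}}\A f=\A((\Delta+\rho_{\mathfrak q}^2)f)$, hence by induction $\partial_s^{2k}\A f=\A\bigl((\Delta+\rho_{\mathfrak q}^2)^k f\bigr)$; applying the decay estimate to the Schwartz function $(\Delta+\rho_{\mathfrak q}^2)^k f$ shows that every even-order derivative of $\A f$ is rapidly decreasing. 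Finally a one-variable interpolation on each interval $[s,s+1]$ (where $|\partial_s\phi|$ is controlled by $\sup|\phi|$ and $\sup|\phi''|$) upgrades this to all derivatives, giving $\A f\in\S(\R)$.

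The step I expect to be the main obstacle is the rescaling argument in part~(ii), for two reasons. First, the convergence of the underlying integral is borderline — the exponents satisfy $k=4a-2l$ exactly — so the logarithmic gain encoded in $(1+|x|)^{-N}$ cannot be wasted, and the splitting of this factor must be done carefully in both limits $s\to\pm\infty$. Second, the boundedness of the coefficient $e^{\rho_1 s}$ times the $s$-power coming from the Jacobian is not a numerical coincidence but reflects the identities $\rho_{\mathfrak q}=2a$ and $\rho_1=\tfrac12(dp-dq)+(d-1)$; the whole scheme works precisely because these two cancellations occur simultaneously, and keeping the exponent bookkeeping correct — especially the differing substitutions for $s\ge0$ and $s\le0$ — is the delicate part.
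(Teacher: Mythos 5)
Your argument is correct and follows essentially the same route as the paper: part (i) rests on the same lower bound $\cosh^2|a_sn^*H|\ge\cosh^2 s$, and part (ii) uses exactly the paper's rescaling ($u'=\sqrt{1+e^{-2s}}\,\hat u$, $v=\cosh s\,\hat v$, $w=\tfrac12(1+e^{-2s})\,\hat w$ is the same substitution written differently), with the same cancellation of the Jacobian and $(\cosh s)^{-\rho_{\mathfrak q}}$ powers against $e^{\rho_1 s}$ and the same appeal to the borderline convergence computed in Theorem~\ref{thm1}. The only, harmless, divergences are that you control derivatives via the intertwining relation (\ref{exchange}) plus one-variable interpolation where the paper invokes the $U(\mathfrak g)$-equivariance of $R$ from Theorem~\ref{thm1}(iii), and that you make explicit the splitting of the decay factor $(1+|x|)^{-N}$ using $|x|\gtrsim|s|+\log r$, a step the paper leaves implicit.
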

\begin{proof}
Let $f\in C^\infty _R (G/H)$, for $R>0$. By (\ref{aexpN}) and (\ref{nforpgeq}), we have
\[
\cosh ^{2} (|a_s \exp (N _{(v,u^1),v,w})|) =
(\cosh s + 1/2 e^s |u'| ^2)^2 +|v|^2+ |e^sw|^2 \ge \cosh ^2 s,
\]
and thus $R f(s) =0$,
for $|s|> R$, which shows (i).

For (ii), let $f\in \Sch(G/H)$.
As before we have, for $n\in \N$,
\begin{equation}\label{int}
\int _{N^*} |f (a_s n^*H)| dn^*\le C \int _{N^*} a(a_s n^*)^{-\rho_{\mathfrak q}} (1+|a_s n^*|)^{-n} dn^*,
\end{equation}
where $C$ is a positive constant.

The integral in (\ref{int}) is bounded by
\begin{align*}
&\int _{\R ^{dp-qd} \times \R^{dq}\times \R^{d-1}} ((\cosh s + 1/2 e^s |u'| ^2)^2
+|v|^2+ |e^sw|^2)^{-\frac {dp+dq+2(d-1)}4}\\
&\qquad\qquad \times(1+\log (( \cosh s + 1/2 e^s |u'| ^2)^2 +|v|^2+ |e^sw|^2)^{1/2}))^{-n} du' dvdw
\end{align*}
\begin{align*}
&\le (\cosh s)^{-\frac {dp+dq+2(d-1)}2}
\int _{\R ^{dp-dq} \times \R^{dq}\times \R^{d-1}} (1 + (1/{\sqrt 2}(\cosh s)^{-\frac 12} e^{s/2} |u'|) ^2)^2\\
& \qquad \qquad +|(\cosh s)^{-1}v|^2+ |(\cosh s)^{-1}e^s w|^2)^{-\frac {dp+dq+2(d-1)}4}\\
&\qquad \times(1+\log ((1 + (1/{\sqrt 2}(\cosh s)^{-\frac 12} e^{s/2} |u'|) ^2)^2\\
& \qquad \qquad+|(\cosh s)^{-1}v|^2+ |(\cosh s)^{-1}e^s w|^2 )^{-n} du' dv dw,
\end{align*}
since $\log \cosh s\ge 0$.

Consider the substitutions ${\overline u} = 1/{\sqrt 2}(\cosh s)^{-\frac 12} e^{s/2} u'$,
${\overline v} =(\cosh s)^{-1}v$ and ${\overline w} =(\cosh s)^{-1}e^s w$. Then
$du' = ({\sqrt 2}(\cosh s)^{\frac 12} e^{-s/2})^{dp-dq} d{\overline u}$,
$dv = (\cosh s)^{dq} d{\overline v}$, and $dw = ((\cosh s )e^{-s})^{d-1} d{\overline w}$, and the above integral becomes
\begin{align*}
&={\sqrt 2}^{dp-dq}e^{-\frac {dp-dq+2(d-1)}2 s}
\int _{\R ^{dp-dq} \times \R^{dq}\times \R^{d-1}} ((1 + |{\overline u}|^2)^2 +|{\overline v}|^2 +
 |{\overline w}|^2)^{-\frac {dp+dq+2(d-1)}4}\\
 &\qquad \qquad \times
(1+\log ((1 + |{\overline u}|^2)^2 +|\overline v|^2 +|\overline w|^2))^{-n} d{\overline u} d{\overline v}
d{\overline w}\\
& \le C_{p,q} a_s^{-\rho _1},
\end{align*}
where $C_{p,q}$ is a constant only depending on $p$ and $q$.
The proposition follows using the $U(\mathfrak g)$-equivariance of the Radon transform from Theorem~\ref{thm1}\,(iii).
\end{proof}

Let $\Sch(G/H)_d = L^2(G/H)_d\cap \Sch(G/H)$ denote the
span of the discrete series in $\Sch(G/H)$.
\begin{proposition}
Let $p\ge q$.
Then $R f = 0$, for $f\in \Sch(G/H)_d$.
\end{proposition}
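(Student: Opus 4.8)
The plan is to reduce everything to a single irreducible discrete series and to exploit the sharp regularity of the Abel transform that is available here. Fix a discrete series parameter $\lambda$ and take any $\phi\in T_\lambda\cap\Sch(G/H)$. Since $\phi$ lies in the representation space $T_\lambda$, the Laplace--Beltrami operator acts on it by the scalar recorded in Section~3, namely $\Delta\phi=(\lambda^2-\rho_{\mathfrak q}^2)\phi$. My first goal is to show $\A\phi=0$, i.e.\ $R\phi(a_s)=0$ for all $s$; everything else is then formal.

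For this core step I apply the intertwining relation of Theorem~\ref{thm2}. Combining $\A(\Delta\phi)=(\Delta_{A_{\mathfrak q}}-\rho_{\mathfrak q}^2)\A\phi$ with $\Delta\phi=(\lambda^2-\rho_{\mathfrak q}^2)\phi$ yields
\[
\Delta_{A_{\mathfrak q}}\,\A\phi=\lambda^2\,\A\phi ,
\]
so, under the identification $A_{\mathfrak q}\sim\R$, the function $s\mapsto\A\phi(s)$ satisfies $(\A\phi)''=\lambda^2\A\phi$ and is thus a linear combination $c_1e^{\lambda s}+c_2e^{-\lambda s}$, exactly as anticipated in the remark following Theorem~\ref{thm2}. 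Now I invoke the hypothesis $p\ge q$ through Proposition~\ref{thm1-1}(ii): since $\phi\in\Sch(G/H)$, its Abel transform $\A\phi$ is a genuine Schwartz function on $\R$. As $\lambda\ne0$ for any discrete series parameter, a nonzero combination of $e^{\lambda s}$ and $e^{-\lambda s}$ grows exponentially at $+\infty$ or $-\infty$ and cannot be Schwartz; hence $c_1=c_2=0$ and $\A\phi\equiv0$. Therefore $R\phi(a_s)=0$ for every $s\in\R$.

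To upgrade this to $R\phi\equiv0$ on all of $G/H^AN_1$, I use the $G$-equivariance of Theorem~\ref{thm1}(iii) in the form $R\phi(g)=R(L_{g^{-1}}\phi)(e)$, where $L_g$ denotes left translation and $e=a_0$. Because the discrete part $L^2(G/H)_d$ is $G$-stable and the Schwartz space is preserved by smooth translation, $L_{g^{-1}}\phi$ again lies in $T_\lambda\cap\Sch(G/H)$, so the previous paragraph gives $R(L_{g^{-1}}\phi)(a_0)=0$ and hence $R\phi(g)=0$. Finally, an arbitrary $f\in\Sch(G/H)_d$ is obtained from such $\phi$ by linear combination (and, for the closed span, a limit): since $f\mapsto\A f$ is continuous into $\S(\R)$ by the quantitative bounds in the proof of Proposition~\ref{thm1-1}(ii), and vanishes on every irreducible summand, it vanishes on all of $\Sch(G/H)_d$; combined with the equivariance argument this gives $Rf=0$.

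The heart of the matter, and the only place the hypothesis $p\ge q$ is used, is the assertion $\A\phi\in\S(\R)$: it is precisely Proposition~\ref{thm1-1}(ii) that forces the exponential solutions to vanish. This is exactly what fails when $p<q$, where a spherical discrete series has $\A\psi_\lambda=Ce^{\lambda s}\notin\S(\R)$ with $C\ne0$, so the exponentials survive and non-cuspidal series appear (cf.\ Theorem~\ref{Main-thm}(ii)). The only genuinely technical point is the last reduction from a single irreducible to the full space $\Sch(G/H)_d$, which rests on the continuity of the Abel transform into $\S(\R)$ together with the $G$-stability of the discrete part.
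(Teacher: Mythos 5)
Your proof is correct and follows essentially the same route as the paper's: combine Proposition~\ref{thm1-1}(ii) (for $p\ge q$ the Abel transform of a Schwartz function is Schwartz) with the intertwining relation of Theorem~\ref{thm2}, which forces $\A\phi$ to be a linear combination of $e^{\pm\lambda s}$ and hence zero. You additionally spell out the translation and density arguments that the paper leaves implicit, which is fine but not a different method.
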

\begin{proof}
Let $f\in\Sch(G/H)_d$. Then $\A f$ belongs to $\S(A_{\mathfrak q})$ by Theorem~{\ref{thm1-1}}, but at the same time $\A f$ is also an eigenfunction of
$\Delta _{A_{\mathfrak q}}$ on $A_{\mathfrak q}$. We conclude that $\A f=0$, and thus $R f = 0$.
\end{proof}

\section{Proof of (i) - (v) of the main theorem for $q>p$}

Let $\psi _\lambda$ be a generating function for the discrete series with parameter $\lambda$.
We notice that $\mu _\lambda =\lambda-\frac 12(dq-dp)+1=\lambda +\rho_{\mathfrak q}-dq-(d-1)+1$.
\begin{proposition}\label{newproof}
Let $p < q$. Then $R \psi _\lambda =0$,  for $\mu _\lambda > 0$. For $\mu _\lambda \le 0$, we have
\begin{equation}\label{equality}
R \psi _\lambda (a_s) = \int _{N^*} \psi _\lambda (a_s n^*) dn^*= C e^{(\mu_\lambda-d)s},\qquad (s\in \R),
\end{equation}
where $C\ne 0$ is a constant depending on $p,q$.
\end{proposition}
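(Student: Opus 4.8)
The plan is to compute the Radon integral $R\psi_\lambda(a_s) = \int_{N^*}\psi_\lambda(a_s n^*)\,dn^*$ directly, using the explicit description of $\psi_\lambda$ on $A_{\mathfrak q}$ together with the parametrization of $\mathfrak n^*$ from \eqref{nforplq} for the case $p<q$. First I would express $\psi_\lambda(a_s n^* H)$ in terms of the spherical coordinate $|a_s n^*|$. Since $n^* = \exp(N_{u,(u,v'),w})$ in the $p<q$ regime, the key input is the hyperbolic-distance formula, which (as in the proof of Proposition~\ref{thm1-1}, with the roles of $u$ and $v$ reversed) should yield an expression for $\cosh^2(|a_s n^*|)$ as an explicit quadratic-type polynomial in $u, v', w$ with $s$-dependent coefficients. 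Substituting this into the generating function formula $\psi_\lambda(k a_s H) = P_\lambda(\cosh^2 s)(\cosh s)^{-\lambda-\rho_{\mathfrak q}-2m}$ (for $\mu_\lambda = -2m \le 0$) reduces the problem to an explicit Euclidean integral over $\R^{dp}\times\R^{d(q-p)}\times\R^{d-1}$.

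The main computation is then a scaling argument. As in Proposition~\ref{thm1-1}, I would introduce substitutions of the form $\overline u \sim (\cosh s)^{-\alpha} e^{\beta s} u$, etc., chosen so that the $s$-dependence factors completely out of the integrand, leaving a convergent constant integral times a pure exponential in $s$. The Jacobians of these substitutions, combined with the $(\cosh s)$-power from $\psi_\lambda$, should collapse (after using $\cosh s \sim \tfrac12 e^{s}$ asymptotics, or more carefully tracking exact powers) to $C e^{(\mu_\lambda - d)s}$. The bookkeeping of exponents is where the identity $\mu_\lambda = \lambda + \rho_{\mathfrak q} - dq - (d-1) + 1$ noted just before the proposition will be used to rewrite the resulting exponent in the clean form $\mu_\lambda - d$. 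For the case $\mu_\lambda > 0$, the generating function decays like $(\cosh s)^{-\lambda - \rho_{\mathfrak q}}$ with no compensating polynomial factor; here I expect the same scaling to produce an exponential $e^{(\text{something})s}$, but the abstract argument from Section~6 forces $R\psi_\lambda$ to be a combination of $e^{(\pm\lambda - \rho_1)s}$ that also lies in a Schwartz-type space, which is only possible if it vanishes identically — so I would invoke the eigenfunction/decay dichotomy (as in the proposition following Theorem~\ref{thm2}) rather than a direct cancellation.

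The hard part will be verifying that the constant $C$ is genuinely nonzero when $\mu_\lambda \le 0$, and that the exponent comes out exactly as $\mu_\lambda - d$ rather than its reflected partner. Nonvanishing of $C$ amounts to checking that the convergent Euclidean integral of a strictly positive integrand does not secretly vanish; since the integrand is positive when $P_\lambda$ is evaluated appropriately, the subtlety is really that $P_\lambda$ is a polynomial of degree $m$ that could introduce sign changes, so I would need to confirm that the leading behavior is captured correctly and that no cancellation occurs at the level of the top-degree term. Identifying which of the two admissible exponents $\pm\lambda - \rho_1$ survives requires matching against the growth of $\psi_\lambda$ as $s\to+\infty$: the generating function decays, forcing the surviving exponential to be the one with the correct sign, and I would pin this down by comparing the explicit exponent $\mu_\lambda - d$ against $-\lambda - \rho_1$ using $\rho_1 = \tfrac12(|dp-dq| + 2(d-1))$ with $q>p$. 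This consistency check is the crux and the place where an error in the distance formula or a Jacobian would surface.
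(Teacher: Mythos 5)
Your overall strategy (direct computation of the integral over $N^*$ using \eqref{aexpN} and \eqref{nforplq}, followed by rescaling) is the right starting point and matches the paper's opening moves, but there are two genuine gaps. First, the scaling argument does not go through as you describe: for $p<q$ the distance formula contains the square $(\cosh s-\tfrac12 e^s|v'|^2)^2$, and after the natural substitutions $\tilde v'=(e^s/(2\cosh s))^{1/2}v'$, $\tilde u=(\cosh s)^{-1}u$, $\tilde w=(e^s/\cosh s)w$ the cross term leaves a residual factor $-2(\tanh s)|\tilde v'|^2$ \emph{inside} the integrand, so the $s$-dependence does not factor out into a pure exponential. The paper has to reduce to the one-dimensional integral $\int_0^\infty(1+x^2-2(\tanh s)x)^{-\nu}x^{\mu-1}\,dx$ and evaluate it via \cite[3.252(10)]{GR} as a Legendre/hypergeometric function; the crucial point, which your plan has no substitute for, is that ${}_2F_1(\mu_\lambda/2,\,1-\mu_\lambda/2;\,\cdot\,;\,z)$ \emph{terminates} because $\mu_\lambda$ is an even integer, being a polynomial of degree $-\mu_\lambda/2$ for $\mu_\lambda\le0$ and of degree $\mu_\lambda/2-1$ for $\mu_\lambda>0$. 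This termination is what produces both the clean exponential $Ce^{(\mu_\lambda-d)s}$ and the asymptotics needed later.

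Second, your treatment of the case $\mu_\lambda>0$ is not viable: the Schwartz-type decay of $\A f$ is established in the paper only for $p\ge q$ (Proposition~\ref{thm1-1}), and it is false for $p<q$ --- indeed $\A\tilde\psi_\lambda(s)=Ce^{\lambda s}$ for $\mu_\lambda\le0$ is precisely a non-decaying eigenfunction, which is the whole point of the non-cuspidality result. So you cannot invoke an ``eigenfunction in a Schwartz space must vanish'' dichotomy here. What the paper actually does is dominate $|\psi_\lambda|\le\|\phi_{\mu_\lambda}\|_\infty\tilde\psi_\lambda$ and read off from the explicit hypergeometric formula the two one-sided bounds $|R\psi_\lambda(a_s)|\le Ce^{-ds}$ as $s\to+\infty$ and $\le Ce^{(\mu_\lambda-d)s}$ as $s\to-\infty$; since $R\psi_\lambda(a_s)$ is a combination of $e^{(\mu_\lambda-d)s}$ and $e^{(-\lambda-\rho_1)s}$, each bound kills one of the two coefficients. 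Relatedly, for $\mu_\lambda=-2m<0$ your worry about sign cancellations in $P_\lambda$ is resolved in the paper not by examining the integrand's sign but by writing $\psi_\lambda=\tilde\psi_\lambda+\sum_{j=1}^m C_j\tilde\psi_{\lambda+2j}$, so that $R\psi_\lambda(a_s)=C_0e^{-ds}(1+e^{-2s})^m+\sum_{j<m}C_je^{-ds}(1+e^{-2s})^j$ with $C_0\ne0$ coming from the normalized leading term; the coefficient of $e^{(-d-2m)s}=e^{(\mu_\lambda-d)s}$ is then $C_0\ne0$, and the two-exponential constraint forces $R\psi_\lambda(a_s)=Ce^{(\mu_\lambda-d)s}$. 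You would need to supply these mechanisms to close the argument.
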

\begin{proof}
We define a $K$-invariant function $\tilde {\psi} _\lambda$ as
$\tilde {\psi} _\lambda (k a_s H) =  (\cosh s)^{-\lambda -\rho_{\mathfrak q}}$.
Then by (\ref{aexpN}) and (\ref{nforplq}), the Radon transform $R \tilde {\psi} _\lambda (a_s)$ is
\begin{equation}\label{subst}
\int _{\R ^{dq-dp} \times \R^{dp}\times \R^{d-1}}( (\cosh s-1/2e^s |v'|^2)^2+ |v'|^2
+ |u|^2+|e^s w|^2) ^{-\frac {\lambda +\rho_{\mathfrak q}}2}dv'dudw.
\end{equation}
Substituting $\tilde v' = (e^s/(2 \cosh s))^{1/2}v'$, $\tilde u = (1/\cosh s) u$ and $\tilde w = (e^s/\cosh s) w$, this becomes
\begin{align*}
&=\left (\frac {e^s}{2 \cosh s}\right )^{-\frac {dq-dp}2}\left (\frac 1{\cosh s}\right )^{-dp}\left (\frac {e^s}{\cosh s}\right ) ^{-(d-1)}
(\cosh s)^{-\lambda - \rho_{\mathfrak q}}\\
&\,\,\times\int _{\R ^{dq-dp} \times \R^{dp}\times \R^{d-1}}\left ( (1- |\tilde v'|^2)^2
+\left (\frac 2{e^s \cosh s}\right ) |\tilde v'|^2 + |\tilde u|^2 + |\tilde w|^2\right )
^{-\frac {\lambda +\rho_{\mathfrak q}}2}d\tilde v'd\tilde u d\tilde w\\
&=2^{\frac {dq-dp}2} e^{-\frac {dq-dp+2(d-1)}2 s}
(\cosh s)^{-\lambda}\\
&\,\,\times\int _{\R ^{dq-dp} \times \R^{dp}\times \R^{d-1}}\left ( 1+ |\tilde v'|^4 - 2 (\tanh s)
|\tilde v'|^2 + |\tilde u|^2+|\tilde w|^2\right )
^{-\frac {\lambda +\rho_{\mathfrak q}}2}d\tilde v'd\tilde u d\tilde w.
\end{align*}

Using the substitution ${\tilde{\tilde{u}}} = (1+ |\tilde v^1|^4 - 2 (\tanh s)
|\tilde v'|^2)^{-1/2}\tilde u$, and likewise for $\tilde w$, the integral becomes
\begin{align*}
&
\int _{\R ^{dq-dp} \times \R^{dp}\times \R^{d-1}}\left ( 1+ |\tilde v^1|^4 - 2 (\tanh s)
|\tilde v'|^2 \right )
^{-\frac {\lambda +\rho_{\mathfrak q}}2 + \frac {dp}2+ \frac{d-1}2}\\
&\qquad\qquad \times (1+|{\tilde{\tilde{u}}}|^2+|{\tilde{\tilde{w}}}|^2)
^{-\frac {\lambda +\rho_{\mathfrak q}}2}
d\tilde v'd{\tilde{\tilde{u}}}d{\tilde{\tilde{w}}}\\
&\qquad =C\int _0 ^\infty
\left ( 1+ \xi ^4 - 2 (\tanh s)\xi ^2 \right )
^{-\frac {\lambda +\rho_{\mathfrak q}-dp-(d-1)}2}
\xi ^{dq-dp-1}d\xi \\
&\qquad=\frac 12 C\int _0 ^\infty
\left ( 1+ x ^2 - 2 (\tanh s) x  \right )^{-\frac {\lambda +\rho_{\mathfrak q}-dp-(d-1)}2}
x ^{\frac {dq-dp}2 -1}dx
\end{align*}
using polar coordinates, where $C$ is the positive constant given by
\begin{equation*}
C=\int _0 ^\infty\int _0 ^\infty(1+\eta ^2+\sigma ^2)^{-\frac {\lambda +\rho_{\mathfrak q}}2}
  \eta ^{dp-1} d\eta\sigma ^{d-2}d\sigma
< \infty.
\end{equation*}

From \cite[3.252(10)]{GR}, we get
\begin{align*}
\int _0 ^\infty&
\left ( 1+ x ^2 + 2 (\cos t)
x  \right )^{-\nu } x ^{\mu -1}dx =\\
&2^{\nu -\frac 12} (\sin t)^{\frac 12 -\nu} \Gamma (\nu +\frac 12)B(\mu,2\nu -\mu)
P^{\frac 12 - \nu} _{\mu - \nu -\frac 12} (\cos t).
\end{align*}
We also have
\begin{align*}
&P^{\frac 12 - \nu} _{\mu - \nu -\frac 12} (y)=\\
1/\Gamma (\nu +\frac 12)& \left (\frac {1+y}{1-y}\right )^{\frac 12(\frac 12 - \nu)}
{}_2 F_1 \left (-\mu +\nu +\frac 12,\mu -\nu +\frac 12; \nu +\frac 12; \frac 12 - \frac 12 y\right ).
\end{align*}
With $y = \cos t = -\tanh s$, for $0<t<\pi$, we get $\sin t = 1/\cosh s$, $1-y = e^s/\cosh s$ and
$1+y =e^{-s}/\cosh s$. Putting this together, we get
\begin{align*}
\int _0 ^\infty
\left ( 1+ x ^2 - 2 (\tanh s)
x \right )^{-\nu }& x ^{\mu -1}dx =B(\mu,2\nu -\mu)(2 (\cosh s) e^s)^{\nu - \frac 12}\\
&\times {}_2 F_1 \left (-\mu +\nu +\frac 12,\mu -\nu +\frac 12; \nu +\frac 12; \frac 1{1+e^{-2s}}\right ).
\end{align*}

With $\mu = \frac {dq-dp}2$ and $\nu = \frac {\lambda +\rho_{\mathfrak q}-dp-(d-1)}2$, we get
\begin{equation*}
R \tilde {\psi} _\lambda (a_s)=C_\lambda e^{-ds} (1+ e^{-2s}) ^{-\frac {\mu_\lambda}2}
 {}_2 F_1\left (\frac {\mu_\lambda}2,
1 - \frac {\mu_\lambda}2;\frac {\mu_\lambda+dq-dp}2 ;\frac 1{1+e^{-2s}}\right ),
\end{equation*}
where $C_\lambda$ is a positive constant depending on $p,q$ and $\lambda$.

The hypergeometric function
$z\mapsto {}_2 F_1 ( {\mu_\lambda}/2, 1 - {\mu_\lambda}/2; (\mu_\lambda+dq-dp)/2 ;z )$
is a polynomial of degree $-{\mu_\lambda}/2$ for
$\mu_\lambda \le 0$, and degree ${\mu_\lambda}/2-1$ for
$\mu_\lambda > 0$.
We thus immediately get (\ref{equality}) for $\mu_\lambda=0$.

Now let $\mu_\lambda=-2m<0$. We can
write $\psi _\lambda (a_s)$ as the sum
\begin{equation*}
\psi _\lambda (a_s)=(\cosh s)^{-\lambda-\rho_{\mathfrak q}}+\sum _{j=1}^{m} C_j (\cosh s)^{-(\lambda+2j) -\rho_{\mathfrak q}}; 
\end{equation*}
or
\begin{equation*}
\psi_\lambda = \tilde {\psi}_\lambda  + \sum_{j=1}^{m} C_j \tilde {\psi}_{\lambda+2j}.
\end{equation*}
It follows that $R\psi_\lambda(a_s)$ can be written as a sum
\begin{equation*}
R\psi_\lambda(a_s)= C_0 e^{-ds} (1+ e^{-2s}) ^{m}+ \sum _{j=0}^{m-1} C_je^{-ds}(1+ e^{-2s}) ^{j},
\end{equation*}
where $C_0$ is a non-zero constant corresponding to the factor $\tilde {\psi} _\lambda(a_s)=(\cosh s)^{-\lambda -\rho_{\mathfrak q}}$.
Thus, since we know that $R\psi_\lambda(a_s)$ is a linear combination of
$e^{(\lambda-\rho _1)s}=e^{(\mu_\lambda-d)s}$ and $e^{(-\lambda-\rho _1)s}$,
we get $R\psi_\lambda(a_s)=Ce^{(\mu_\lambda-d)s}$, for a non-zero constant $C$.

Let finally
$\mu_\lambda > 0$.
Then $|{\psi} _\lambda|\le \|\phi_{\mu _\lambda}\|_\infty\tilde {\psi} _\lambda$, and
$|R{\psi} _\lambda|\le \|\phi_{\mu _\lambda} \|_\infty R\tilde {\psi} _\lambda$. We have
\begin{equation}\label{inftyasym}
|R{\psi} _\lambda (a_s)|\le C_1 R\tilde {\psi} _\lambda (a_s)\le C_2 e^{-ds}
\qquad \text{for}\qquad s\to \infty,
\end{equation}
and
\begin{equation}\label{-inftyasym}
|R{\psi} _\lambda (a_s)|\le C_1 R\tilde {\psi} _\lambda (a_s)\le C_2 e^{(\mu_\lambda-d)s}
\qquad \text{for}\qquad s\to -\infty,
\end{equation}
for positive constants $C_i$. Since
$s\mapsto R\psi_\lambda(a_s)$ again is a linear combination of
$e^{(\mu_\lambda-d)s}$ and
$e^{(-\lambda-\rho _1)s}$, we see
from (\ref{inftyasym}) and (\ref{-inftyasym})
that $R\psi_\lambda =0$.
\end{proof}

Consider the cases where $p<q$ and $d(q-p) \le 2$, i.e.,\,the cases $(d,p,q)=(1,q-1,q), \, (d,p,q)=(1,q-2,q)$ and $(d,p,q)=(2,q-1,q)$.
In the first case $\mu_\lambda = \lambda + 1/2$, and $\mu_\lambda = \lambda $ in the last two cases.
This means that $\mu_\lambda > 0$ and (i) follows from Proposition~\ref{newproof}.

For the proof of (v), we need to consider the cases
where $p<q$ and $d(q-p) \le 1$, i.e.,\,the cases $(d,p,q)=(1,q-1,q)$. From \cite[Theorem 5.1~(iii)(a)]{AF-JS}
(recall that in that paper $p:=p-1$), we see that the Schwartz condition in the real case is also satisfied for $p=q-1$.

\section{Reduction to the real case ($d=1$)}

Some of our results above for the projective hyperbolic spaces could also be established from \cite[Theorem~5.2]{AF-JS} via the remark below.
However, we feel that the new and different presentation, and in particular the new proof of Proposition~\ref{newproof},
merits the space given.

Let $\Fi = \C, \, \Ha$, with $p\ge 0, \, q\ge 1$, and $d=\dim _\R \Fi$.
There is a natural projection
\begin{equation*}
\X (dp,dq,\R) \to \X(p,q,\mathbb F),
\end{equation*}
with a natural action of $\U( 1; \mathbb F)$ on $\X(dp, dq; \R)$. Let
\begin{equation*}
\mathcal{E}_\lambda (p,q,\mathbb F)= \{f \in C^\infty (\X (p,q,\mathbb F)) \mid \Delta f= (\lambda^2-\rho_{\mathfrak q}^2) f \},
\end{equation*}
then there is a $G$-homomorphism,
\begin{equation*}
\mathcal{E}_\lambda (p,q,\mathbb F)\to \mathcal{E}_\lambda (dp,dq,\mathbb R),
\end{equation*}
which is an isomorphism onto the $\U(1; \mathbb F)$ invariant functions in $\mathcal{E}_\lambda (dp,dq,\mathbb R)$. We refer to
\cite{HS} for more details.

Let $p'+1=d (p+1)$ and $q'+1 = d(q + 1)$. We note that
$\rho_{\mathfrak q} = \frac 12 (dp+dq+2(d-1))=\frac 12 (p'+q')=\rho_{\mathfrak q}' $.
Let $\tilde {\psi} _\lambda$ be as in the proof of Theorem~\ref{newproof}, and let
$R^d _{p,q}$ and $R^1 _{p',q'}$ denote the Radon transforms corresponding to the spaces
$\X(p+1,q+1,\mathbb F)$ and $\X (p'+1,q'+1,\R)$ respectively.
Using the substitution $w' = e^s w$ in (\ref{subst}), we get the identity:
\begin{equation}\label{redtoone}
R^d _{p,q} \tilde {\psi} _\lambda (a_s) = e^{-(d-1)s}R^1 _{p',q'} \tilde {\psi} _\lambda (a_s),\qquad (s\in \R),
\end{equation}
which shows that some of our results for the projective hyperbolic spaces follow from the real ($d=1$) case. 
Notice though, that the elements $a_s\in A_{\mathfrak q}$ on the left-hand and the right-hand side of (\ref{redtoone}) belong to 
different groups, and that the reduction only works for the Abelian part.
Similarly, we get
\begin{equation*}
\A^d _{p,q} \tilde {\psi} _\lambda = \A^1 _{p',q'}  \tilde {\psi} _\lambda.
\end{equation*}

\section{Proof of (vi) of the main theorem - A Closer study of $Rf$ near $+\infty$}

We want to prove that $\A(Df) \in \S (\R)$, for $f\in \Sch(G/H)$ and $d(q-p)>1$.
Although we believe this to be true in general, our proof, near $+\infty$, is only valid for the dense $G$-invariant subspace 
generated by the $K$-finite and $(K\cap H)$-invariant functions.

For $d=1$, \cite[Theorem 5.1~(iii)(b)]{AF-JS} yields that the Schwartz decay conditions are
satisfied near $-\infty$ for $\A(f)$, and thus also for $\A(Df)$. For $d>1$, the proof from \cite{AF-JS} is easily
adapted in the same way as formula (\ref{redtoone}), which leaves us to study $Rf$ near $+\infty$.
We will concentrate on the proof for $d=2,4$ below, 
with some comments on the $d=1$ case, and further remarks in Section~\ref{last section}.

Consider the subgroup $T$ given by
\[k_{\theta} =
\left(
\begin{array}
[c]{cccc}%
I_{p+1} & 0 & 0& 0 \\
0& \cos\theta & 0 & \sin\theta\\
0& 0 & I_{q-1} & 0 \\
0& -\sin\theta & 0 & \cos\theta
\end{array}
\right) .
\]
Then
\begin{equation*}
k_\theta a_t \cdot x_0 =
(\sinh t, 0,\dots,0;\sin\theta\cosh t,0,\dots,0,\cos\theta\cosh t).
\end{equation*}

We see that $H\supset K_1$, with $K_1$ normalizing $T$, and
$K_2=(K_2\cap H)T(K_2 \cap H)$, where $K_2 \cap H= U(q, \Fi) \times U(1, \Fi)$.
Furthermore $U(q, \Fi)$ centralizes $A$,
and as is easily seen, $(K\cap H) k_\theta w a_t H = (K\cap H)k_\theta a_t H$, for $w \in U(1, \Fi)$.
From this we deduce that
\begin{equation}\label{G-decomposition}
K\cap H = (K\cap H)^T(K\cap H)^{A_\mathfrak q}\quad\text{and}\quad G=(K\cap H)TAH,
\end{equation}
where $(K\cap H)^T$ and $(K\cap H)^{A_{\mathfrak q}}$ denote the centralizers of $T$ and $A_{\mathfrak q}$ in $K\cap H$ respectively.
It follows that a $K\cap H$-invariant function is uniquely determined by the values $f(k_\theta a_tH)$,
for $(\theta,t)\in [0,\pi]\times\R^+$.

From the equation $(K\cap H) k_\theta a_t H = (K\cap H)a_s n_{u,v',w} H$, we get
\begin{equation}\label{cos-ch1}
\begin{aligned}
(\ch t)^2 &= (\cosh s-1/2e^s |v'|^2)^2+ |v'|^2 + |u|^2+|e^s w|^2, \,\text{and}\\
(\cos \theta \ch t)^2 &= (\cosh s-1/2e^s |v'|^2)^2 + |e^s w|^2. 
\end{aligned}
\end{equation}
Let $x=|u| , y=|v'|$ and $z=e^s|w|$.
Let $v =-\sh s + 1/2\es y^2$, then $y^2 = 1 +2\e-s v - \de-s$, and
\begin{equation}\label{cos-ch2}
\begin{aligned}
(\ch t)^2 &= 1+x^2+v^2+z^2, \,\text{and}\\
(\cos \theta  \ch t)^2&= (v - \e-s)^2 + z^2.
\end{aligned}
\end{equation}
For $p=0$, the variable $x=0$ and the integration over $x$ disappears, 
and for $d=1$, the integration over $z$ disappears,
Furthermore, the equations (\ref{cos-ch1}) and (\ref{cos-ch2}) are slightly different in these cases, see
Section~\ref{last section}.

Consider a $K\cap H$-invariant function $f$ of irreducible $K$-type.
Then the function $k \mapsto f(ka_t)$ is a
zonal spherical function on $K/(K\cap H)$, a Jacobi polynomial in
$\cos \theta$, of even order.
We can thus decompose $f$ as a finite sum of functions of the form
$h(k_\theta a_t) = (\cos \theta)^m h(a_t)$, where $m$ is even and $h$ is an even function.

Define an auxiliary function $H$ by $H(\ch ^2 t)=(\ch t)^{-m}h(a_t )$.
Then, using the change of coordinates from before, we have:
$$
h(k_\theta a_t)= (\cos \theta \ch t)^m H(\ch ^2 t)= ((v-\e-s )^2 + z^2)^{\frac m2} H(1+x^2+v^2+z^2),
$$
where for each $N \in \N$,
$$
|H(1+x^2+v^2+z^2)| < C (1+x^2+v^2+z^2)^{-\frac{\rho_{\mathfrak q}+m}2} (1+\log(1+x^2+v^2+z^2))^{-N}.
$$

With the above substitutions, we find
\begin{equation*}
\begin{aligned}
Rh(s) = e^{-ds} \int_0^\infty \int_0^\infty \int_{-\sh s}^\infty & H(1 + x^2 + v^2+z^2)
((v-\e-s )^2 + z^2)^{\frac m2} \\
& \times (1 +2\e-s v - \de-s)^{\frac{\beta -1}2}  x^\alpha z^{d-2}\, dv \, dx \, dz,
\end{aligned}
\end{equation*}
where $\alpha=dp-1\, , \, \beta=d(q-p)-1>0$, i.e.,\,$\beta$ is a positive integer.

We have the following upper bound, for $s\ge 0$, since $\beta \ge 1$:
\begin{align*}
|Rh(s)|\le C e^{-ds} \int_0^\infty \int_0^\infty \int_{-\infty} ^\infty&
\frac{(1 + x^2 + v^2+z^2)^{-\frac{\rho_{\mathfrak q}+m}2}}
{(1+\log(1 + x^2 + v^2+z^2))^N}(1+v^2+z^2)^{\frac m2} \\
&\times (1+v^2)^{\frac{\beta -1}2} x^\alpha z^{d-2} \, dv\, dx\, dz < +\infty.
\end{align*}
Applying Lebesgue's theorem, we get
$$
\lim_{s\to\infty} e^{ds} Rh(s) =
\int_0^\infty \int_0^\infty \int_{-\infty}^\infty
H(1 + x^2 + v^2+z^2)(v^2+z^2)^{\frac m2}  x^\alpha z^{d-2}\, dv\, dx\, dz.
$$

For convenience, we replace $z$ by $u$. We can define $Rh(s)$ as a function of the variable $z=\e-s $ near $z=0$,
for $z>0$. Let $F(z) = e^{ds} Rh(s)$, then
\begin{equation*}
\begin{aligned}
F(z) = \int_0^\infty\int_0^\infty \int_{\frac 12(z-z^{-1})}^\infty& H(1 + x^2 + v^2+u^2)
((v-z)^2 +u^2)^{\frac m2}\\
&\times (1 +2 z v - z^2)^{\frac{\beta -1}2}  x^\alpha u^{d-2} \, dv\, dx\, du.
\end{aligned}
\end{equation*}

Let $k_0$ be the largest integer such that $k_0< (\beta -1)/2 +1$, and $0\le k < k_0$. The derivatives $d^k/dz^k$ of
the integrand are zero at $v=-\sh s= \frac 12(z-z^{-1})$, whence the integrand is at least
$k_0$ times differentiable near $z=0$, and we can compute the
derivatives $d^k/dz^k F(z)$. For $k_0>0$, we will use Taylor's formula
to express $F(z)$ as a polynomial of degree $k_0-1$, plus a remainder term
involving $d^{k_0}/dz^{k_0}F(\xi)$, for some $0< \xi(z)< z$.

\begin{lemma}\label{diff kernel2}
Fix $v,u \in \R$, $m\in 2\Z^+$ and $\delta\in \frac 12 \Z^+ $, and define
$$
S(z)= S_{v,u,m,\delta}(z)= ((v-z )^2+ u^2)^{\frac m2} (1 +2 z v - z^2)^\delta.
$$
For $0\le j< \delta +1$, $d^j/dz^j S(z)$ is a polynomial in $((v-z )^2+ u^2), (v-z)$ and $(1 +2 z v - z^2)$,
of degree at most $m+\delta$ in $v$,  $m$ in $u$ and $m+2 \delta - j$ in $z$.
For $z=0$, the degree is at most $m+j$ in $v$, and $m$ in $u$.
When $j$ is odd, $d^j/dz^j S$ is an odd function of $v$ at $z=0$.
\end {lemma}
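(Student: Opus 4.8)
The plan is to reduce the whole statement to a single differentiation rule and then read off every claim by bookkeeping on exponents. Set $A=(v-z)^2+u^2$, $B=v-z$ and $C=1+2zv-z^2$, so that $S=A^{m/2}C^\delta$ with $m/2\in\Z^+$. Differentiating in $z$ gives $A'=-2B$, $B'=-1$ and $C'=2B$, and the product rule then yields, for any exponents,
\begin{equation*}
\frac{d}{dz}\bigl(A^aB^bC^c\bigr)=-2a\,A^{a-1}B^{b+1}C^c-b\,A^aB^{b-1}C^c+2c\,A^aB^{b+1}C^{c-1}.
\end{equation*}
Hence the span of the monomials $A^aB^bC^c$ (with $a,b\in\Z_{\ge 0}$ and $c\in\delta+\Z$) is stable under $d/dz$, and since $S$ is one such monomial, an induction on $j$ shows that $d^j/dz^j\,S$ is a finite combination of them. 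Only the third term lowers the exponent of $C$, so the least power of $C$ after $j$ steps is $\delta-j$; for $\delta\in\Z$ and $j\le\delta$ this is a non-negative integer, making $d^j/dz^j\,S$ a genuine polynomial in $A,B,C$, and for half-integer $\delta$ one simply carries along the common factor $C^{\delta-j}$, which is harmless for the degree count.

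Next I would attach to each monomial $A^aB^bC^c$ its formal degrees $\deg_v=2a+b+c$, $\deg_u=2a$ and $\deg_z=2a+b+2c$, coming from the degrees $(2,2)$, $(1,1)$, $(1,2)$ of $A,B,C$ in $(v,z)$ together with the fact that $u$ enters only through $A$. For the initial monomial $A^{m/2}C^\delta$ these equal $m+\delta$, $m$ and $m+2\delta$. Inspecting the three terms of the rule one checks that $\deg_u$ never increases, that $\deg_z$ drops by exactly one in each term, and that $\deg_v$ is preserved by the third term and lowered by the other two. Induction then gives the three global bounds $\deg_v\le m+\delta$, $\deg_u\le m$ and $\deg_z\le m+2\delta-j$; note these remain integral even for half-integer $\delta$, since $2\delta\in\Z$.

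The statements at $z=0$ come from the specialisations $A|_{z=0}=v^2+u^2$, $B|_{z=0}=v$ and $C|_{z=0}=1$, under which a monomial becomes $(v^2+u^2)^av^b$, so that the exponent of $C$ drops out and the relevant $v$-degree is $2a+b$ while the $u$-degree stays $2a$. Writing $\kappa=2a+b$, the three terms of the rule change $\kappa$ by $-1,-1,+1$, so from the initial value $\kappa=m$ the maximum after $j$ steps is $m+j$, which is the claimed bound $m+j$ in $v$ at $z=0$; and $\deg_u=2a\le m$ as before. For the parity claim I would track $b$ modulo $2$: each of the three terms sends $b$ to $b\pm1$ and hence flips its parity, so after $j$ differentiations every surviving monomial has $b\equiv j\pmod 2$. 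Since $(v^2+u^2)^av^b$ has the $v$-parity of $v^b$, the restriction $d^j/dz^j\,S|_{z=0}$ is even in $v$ for even $j$ and odd in $v$ for odd $j$, giving in particular the asserted oddness.

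I do not expect a serious obstacle: once the monomial differentiation rule is isolated, everything is exponent bookkeeping. The only points demanding care are the half-integer values of $\delta$, handled by carrying the benign factor $C^{\delta-j}$ and observing that the stated degree bounds stay integer-valued, and the passage to $z=0$, where the collapse $C|_{z=0}=1$ changes the effective $v$-degree from $2a+b+c$ to $2a+b$; this single change is exactly what sharpens the global bound $m+\delta$ to $m+j$ and, through the parity of $b$, produces the oddness statement.
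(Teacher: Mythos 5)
Your proof is correct and follows exactly the route the paper intends: its entire proof is the one-line observation that $\frac{d}{dz}(1+2zv-z^2)=-\frac{d}{dz}\bigl((v-z)^2+u^2\bigr)=2(v-z)$, and your monomial differentiation rule and exponent bookkeeping are precisely the ``straightforward'' elaboration of that identity. Your explicit handling of the half-integer $\delta$ case matches the caveat the paper records in the remark immediately after the lemma.
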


\begin{proof}
Straightforward, using that $d/dz (1 +2 z v - z^2) = - d/dz ((v - z)^2+u^2)= 2(v-z)$.
\end{proof}

Note, that for $d(q-p)$ odd, that is, $d=1$ and $q-p$ odd, the term $(\beta -1)/2= ((q-p)-2)/2$ is a half-integer,
and the statements in Lemma~\ref{diff kernel2} have to be changed accordingly.

Using Taylors formula, we get
$$
F(z)= c_0 + c_1 z + c_2 z^2 + \cdots + c_{k_0-1} z^{k_0-1} + R_{k_0}(\xi) z^{k_0},
$$
where $0 < \xi < z $, and
$$
 c_j = \frac 1{j!}\int_0^\infty \int_0^\infty \int_{-\infty}^\infty H(1 + x^2 + v^2+ u^2)
\frac{d^j}{dz^j}S_{v,u,m,(\beta-1)/2}(0)\, x^\alpha u^{d-2} \, dv\, dx\, du ,
$$
for $ j\in \{0, \dots, k_0-1\}$.
By Lemma~\ref{diff kernel2}, $c_j=0$, for $j$ odd.
The remainder term $R_{k_0}(\xi)$ is given by:
\begin{equation*}
 \frac 1{k_0 !}  \int_0^\infty \int_0^\infty \int_{\frac 12(\xi-\xi^{-1})}^\infty H(1 + x^2 + v^2+ u^2)
\frac{d^{k_0}}{dz^{k_0}}S_{v,u,m,(\beta-1)/2}(\xi)\, x^\alpha u^{d-2} \, dv\, dx\, du.
\end{equation*}

Consider $\A h(s)=e^{\rho_1 s} Rh(s) = z^{-(\rho_1 -d)} F(z)$, which is equal to
\begin{equation*}
c_0 z^{-(\rho_1 -d)}  + c_2 z^{-(\rho_1 -d-2)} + ... +c_{k_0-2} z^{-2}+ c_{k_0-1} z^{-1} + R_{k_0}(\xi).
\end{equation*}
The exponents $\rho_1 -d-2j= d(q-p)/2-1 -2j$, for $j\in \{0, \dots, k_0-1\}$, correspond to the parameters
$\lambda_1, \dots, \lambda_r$ of the non-cuspidal discrete series. From (\ref{exchange}), and the definition of
the differential operator $D$ in Theorem~\ref{Main-thm}~(vi),
$\A (D h)$ thus only has a possible contribution from the remainder term, and, due to the term $d^2/d s^2$, no
constant term at $\infty$.

Note, that for $d(q-p)$ odd, the last two terms are: $c_{k_0-1} z^{-\frac 12}+ z^{\frac 12} R_{k_0}(\xi)$, 
where the last term is
rapidly decreasing.
For the other cases, the constant term $C_{R_{k_0}}\ =\lim_{s \to \infty} R_{k_0}(\e-s)$ could be non-zero, but 
we will prove that $R_{k_0}(\xi) - C_{R_{k_0}}$ is rapidly decreasing at $+\infty$,
where $\xi = \xi(s)$, with $0 < \xi < \e-s$.
We also consider the case $k_0=0$, with $\xi=\e-s$.

Let $G(v,u,z)=  1/{k_0 !}\, d^{k_0}/dz^{k_0}S_{v,u,m,(\beta-1)/2}(z)$.
Then $G(v,u,z)-G(v,u,0)= z P(v,u,z)$, where $P$ is a polynomial of degree
less than $m+(\beta-1)/2$ in $v$, and less than $m$ in $u$.
Let $|P|$ and $|G|$ denote the polynomials defined
from $P$ and $G$ by taking absolute values in all coefficients.

Let in the following $C$ denote (possibly different) positive constants. With $0 < \xi < \e-s$, we get the following estimates
\begin{align*}
|R_{k_0}(\xi) -& C_{R_{k_0}}|\le \\
\e-s &\int_0^\infty\int_0^\infty \int_{-\infty}^\infty |H(1+x^2+v^2+u^2)||P|(v,u,1)\,x^\alpha u^{d-2}\, dv\, dx\, du \\
+ &\int_0^\infty\int_0^\infty \int_{-\infty}^{-\sh s}\ |H(1+x^2+v^2+u^2)||G|(v,u,0)\,x^\alpha u^{d-2}\, dv\, dx\, du.
\end{align*}
The first integral is bounded by $C\e-s$, since the double integral is convergent.
The second integral is bounded near infinity by $Cs^{-N}$, for all $N$, which is seen as follows.
For $s$ large, the integrand is for every $N \in \N$ bounded by
$C(x^2+v^2+u^2)^{-(\rho +m)/2} |v|^{m+k_0}\log(x^2+v^2+u^2)^{-N}$.
Substituting $v=-v$, $x=x'v$ and $u=u'v$, we have the estimates
\begin{align*}
&\le C\int_0^\infty\int_0^\infty\int^\infty_{\sh s}
(1+x'^2+u'^2)^{-\frac{\rho_{\mathfrak q}+m}2} v^{-\rho_{\mathfrak q}-m+m+k_0+\alpha+1+d-1}\\
&\qquad\qquad\qquad \times(\log(v^2)+\log(1+x'^2+u'^2))^{-N} \,x'^\alpha u'^{d-2}\, dv \, dx'\, du'\\
&\le C
 \int^\infty_{\sh s} v^{-\rho_{\mathfrak q}+k_0+\alpha+d}( \log(v))^{-N}\, dv.
\end{align*}
Inserting the values
$\rho_{\mathfrak q} = d(p+q)/2+d-1, \, k_0= (d(q-p)-2)/2$, and $\alpha=dp-1$, we end up with
$$
C \int^\infty_{\sh s} v^{-1} (\log (v))^{-N}\, dv = C(N-1)^{-1}(\log(\sh s))^{-N+1} \le C s^{-N+1}.
$$
It follows that $R_{k_0}(\xi) - C_{R_{k_0}}$ is rapidly decreasing at $+\infty$, whence $\A(Dh)$
is rapidly decreasing at $+\infty$, since the constant term is not present, which
finishes the proof of Theorem~\ref{Main-thm}~(vi) for $K$-irreducible $(K\cap H)$-invariant functions.

Finally, consider the $G$-invariant subspace $\mathcal V$ of $\Sch(G/H)$ generated
by the $K$-irreducible $(K\cap H)$-invariant functions. The conclusion in (vi) is clearly satisfied for $f \in \mathcal V$.
We need to show that $\mathcal V$ is dense in $\Sch(G/H)$.
Let $0\ne f \in L^2(G/H)$ be perpendicular to $\mathcal V$.
Let $\mathcal U$ be the closed $G$-invariant subspace of $L^2(G/H)$ generated by $f$.
Then $\mathcal U$ contains a non-zero $C^\infty$-vector $f_1 \in \Sch(G/H)$, and after a translation,
we may assume that $f_1(eH) \ne 0$.
The function $f_2$ defined by $0\ne f_2(gH)= \int_{K\cap H} f_1(kgH) \,dk$ is then a
$(K\cap H)$-invariant element in $\mathcal U$, belonging to
the closure of $\mathcal V$, which is a contradiction.

\section{Final Remarks - the remaining cases}\label{last section}

Theorem \ref{Main-thm} also holds for the real non-projective space $G/H=SO(p+1,q+1)_e/SO(p+1,q)_e$, except for item (iii).
The statements (i), (ii), (iv) and (v) are proved in \cite{AF-JS}.
For the proof of (vi), the last equations in (\ref{cos-ch1}) and (\ref{cos-ch2}) should be replaced by
\begin{align*}
\cos \theta \ch t &= \cosh s-1/2e^s |v'|^2 ,\,\text{and}\\
\cos \theta  \ch t &= (v - \e-s).
\end{align*}
Then $(\theta, t) \in [0,2\pi]\times \R^+$, and $m$ could be odd.
For $p=0$, the first equations in (\ref{cos-ch1}) and (\ref{cos-ch2}) should be replaced by:
\begin{align*}
\sh t &= \sh s - 1/2 \es v'^2,\,\text{and}\\
\sh t &= -v ,
\end{align*}
with $(\theta, t) \in [0,2\pi]\times \R$, 
$H$ defined by $H(-\sh t)= (\ch t)^{-m} h(a_t)$, and
$$
|H(v)| < C(1+v^2)^{-\frac{\rho_{\mathfrak q}+m}2} (1+\log(1+v^2))^{-N}.
$$
With these remarks it is not difficult to modify Lemma \ref{diff kernel2}, and complete the proof.
Notice, that a priori all constants $c_j$ in the Taylor expansion could be non-zero.

Finally, we consider the exceptional case, with $\Fi =  \Ca$ (and $p=0, q=1, d=8)$.
We will show that the formulas
(\ref{G-decomposition}) and (\ref{cos-ch1})
are meaningful and true for this case as well.
The formula (\ref{G-decomposition}) was already shown to be true and used in \cite{FJO}.
We give a brief outline of the proof of (\ref{cos-ch1}).

According to \cite{MTK} and \cite{RT}, the exceptional group $G$ can be defined
by the automorphisms of a 27 dimensional Jordan Algebra $J_{1,2}$ parameterized by
$\xi, u \in \R^3 \times \Ca^3$, with basis $E_1, E_2, E_3, F_1, F_2, F_3$.
We denote an element in $J_{1,2}$ by $X(\xi, u)$.

The subgroups $H$ and $K$ are in fact equal to the stabilizers of $E_3$, respectively of $E_1$.
The subgroup $N$, which in this case equals $N^*$, is defined in \cite{MTK} as $u(y,z), \, y\in$Im($\Ca$) and $z \in \Ca$.
The subgroups $A = \{ a_t\}$ and $T = \{ k_\theta\}$ are also defined there.
In \cite{MTK} the expression $a_s u(y,z)E_3$ is calculated, and in \cite{RT}
the expression $k_\theta X(\xi, u)$ is calculated;
combining these two calculations $k_\theta a_t E_3 = k_\theta a_t u(0,0) E_3$ can be calculated.

Recall that $\xi_1, \xi_3$ are invariant under $K\cap H$. To derive the first formula in
(\ref{cos-ch1}), we only need to compare the first coordinates of $k_\theta a_t E_3$
and $k_\theta a_t u(y,z) E_3$, $\xi_1(s,y,z)= \xi_1(t, \theta)$; to derive the second
formula in (\ref{cos-ch1}), we only need to compare the third coordinates
of $k_\theta a_t E_3$ and $k_\theta a_t u(y,z) E_3$, $\xi_3(s,y,z)= \xi_3(t, \theta)$.
We have
\begin{align*}
 \xi_1(t, \theta)&= -(\ch {(2t)}-1)/2, \\
\xi_1(s,y,z)&=
 -\ch {(2s)} ((1-z)/2 + |z|^4/4 +|y|^2)\\
& \qquad+ \sh {(2s)}(1/2 |z|^2(1-|z|^2/2) - |y|^2) + (1-|z|^2)/2,  \\
 \xi_3(t, \theta)&= (\ch {(2t)}+ 1)/4 (1+ \cos {(2\theta)}),\\
 \xi_3(s,y,z)&= 
\ch {(2s)} ((1-z)/2 + |z|^4/4 +|y|^2)\\
&\qquad - \sh {(2s)}(1/2 |z|^2(1-|z|^2/2) - |y|^2) + (1-|z|^2)/2. 
\end{align*}
A tedious, but straightforward calculation, leads to the formulas (\ref{cos-ch1}), with
$v'$ replaced by $z$, and $w$ replaced by $y$.

\bibliographystyle{amsplain}

\begin{thebibliography}{A}

\bibitem{AF-JS}
N.B.~Andersen, M.~Flensted-Jensen and H.~Schlichtkrull,
{Cuspidal discrete series for semisimple symmetric spaces},
\textit{J.\ Funct.\ Anal.\ }{\bf 263} (2012), 2384--2408.

\bibitem{vdb} E.P.~van den Ban, {The principal series for a reductive symmetric space. II. Eisenstein integrals},
\textit{J.\ Funct.\ Anal.\ }{\bf 109} (1992), 331--441.

\bibitem{FJ} M.~Flensted-Jensen, {Discrete Series for Semisimple Symmetric Spaces},
\textit{Ann.\ of Math.\ (2)\ }{\bf 111} (1980), 253--311.

\bibitem{FJO}
M.~Flensted-Jensen and K.~Okamoto,
An explicit construction of the $K$-finite vectors in the
discrete series for an isotropic semisimple symmetric space,
\textit{M\'em. Soc. Math. France (N.S.) }{\bf 15} (1984), 157--199.

\bibitem{GR}
I.~S.~Gradshteyn, I.~M.~Ryzhik, {\it Table of integrals, series, and products},
Academic press, San Diego, USA, sixth edition, 2000.

\bibitem{MTK}
M.~T.~Kosters,
{Spherical Distributions on Rank One Symmetric Spaces}
Thesis, Leiden, 1983.


\bibitem{HS}
H.~Schlichtkrull,
Eigenspaces of the Laplacian on hyperbolic spaces:
composition series and integral transforms,
\textit{J. Funct. Anal. }{\bf 70} (1987), 194--219.

\bibitem{RT}
R.~Takahashi,
Quelque r\'esultats sur l'Analyse Harmonique dans l'epace sym\'etrique non compact
de rang 1 du type exceptionelle,
\textit{Lecture Notes in Math.} {\bf 739} (1979), 511--567.
\end{thebibliography}

\end{document}